\newcommand{\sgn}{\operatorname{sgn}}
      \theoremstyle{plain}
      \newtheorem{theorem}{Theorem}[section]
      \newtheorem{lemma}[theorem]{Lemma}
      \newtheorem{assumption}[theorem]{Assumption}       
      \theoremstyle{definition}
      \newtheorem{definition}[theorem]{Definition}      
      \theoremstyle{remark}
      \newtheorem{remark}[theorem]{Remark}
\begin{document}
\bigskip

\title[Spacings -- an example for Universality in Random Matrix Theory]{Spacings -- an example for Universality in Random Matrix Theory}
   
   \author{Thomas Kriecherbauer}
   \address{Inst. for Mathematics, Univ. Bayreuth, 95440 Bayreuth, Germany}
   \email{thomas.kriecherbauer@uni-bayreuth.de}

    \author{Kristina Schubert}
   \address{Inst. for Math. Stat., Univ. M\"unster, Orl\'{e}ans-Ring 10, 48149 M\"unster, Germany}
   \email{kristina.schubert@uni-muenster.de}


\keywords{Random Matrices, Universality, Spacings}

\begin{abstract}
Universality of local eigenvalue statistics is one of the most striking phenomena of Random Matrix Theory, that also accounts for a lot of the attention that the field has attracted over the past 15 years. In this paper we focus on the empirical spacing distribution and its Kolmogorov distance from the universal limit. We describe new results, some analytical, some numerical, that are contained in \cite{Diss}. A large part of the paper is devoted to explain basic definitions and facts of Random Matrix Theory, culminating in a sketch of the proof of a weak version of convergence for the empirical spacing distribution $\sigma_N$ (see (\ref{sec5:eq1})).
\end{abstract}

 \maketitle

\section{Introduction}
\label{sec:1}
The roots of the theory of random matrices reach back more than a century. They can be found, for example, in the study of the Haar measure on classical 
groups \cite{THur} and in statistics \cite{TWish}. The field experienced a first boost in the 1950's due to a remarkable idea of E.~Wigner. He suggested to model the statistics of highly excited energy levels of heavy nuclei by the spectrum of random matrices. Arguably the most striking aspect of his investigations was how well the random eigenvalues described the distribution of spacings between neighbouring energy levels. Even more surprising were the subsequent discoveries that the eigenvalue spacing distributions are also relevant in a number of different areas of physics (e.g.~as a signature for quantum chaos) and somewhat exotically also in number theory for the description of zeros of zeta functions (see \cite[Ch.~2 and Part III]{akemann2011oxford} for recent reviews). Due to these developments Random Matrix Theory became an active area of research that was prospering for many years mainly in the realm of physics. It was only about 15 years ago that random matrices started to attract broader interest also within mathematics 
stretching over a variety of different areas. The reason for this second boost was the discovery 
\cite{TBaDeJo}
that after appropriate rescaling the length of the longest increasing subsequence of a random permutation on $N$ letters displays for large $N$ the same fluctuations as the largest eigenvalue of a $N \times N$ random matrix (from a particular set of ensembles). Again it turned out that the distribution of the largest eigenvalue (in the limit $N \to \infty$) defines a fundamental distribution that comes up in a number of seemingly unrelated models of combinatorics and statistical mechanics (e.g.~growth models, interacting particle systems; see \cite{Handbook_Spohn} and 
\cite{TKrKr}
for recent reviews).  

In summary, we have seen that the statistics of eigenvalues of random matrices display a certain degree of universality by describing the fluctuations in a varied list of stochastic, combinatorial and even deterministic (zeros of zeta functions) settings. In this paper, however,  we will be concerned with a second aspect of universality that is known as the universality conjecture in random matrix theory. It states that in the limit of large matrix dimensions local eigenvalue statistics (see the beginning of section \ref{Tsec:3} for an explanation of the meaning of this term) only depend on the symmetry class (cf.~sect.~\ref{Tsec:2}) of the matrix ensemble but not on other details of the probability measure. We will discuss this conjecture in the context of the nearest neighbour spacing distribution that has received much less attention in the literature than other statistical quantities such as $k$-point correlations or gap probabilities. We focus on the question of convergence of the empirical spacing distribution of eigenvalues. 

Besides the standard monograph \cite{mehta}, a number of books have appeared recently \cite{deift1}, \cite{deift2}, \cite{Forrester_log_gases}, \cite{anderson}, \cite{Tao_book}, which present nice introductions into various aspects of Random Matrix Theory.  An impressive collection of topics from  Random Matrix Theory and its applications can be found in \cite{akemann2011oxford}. However, in all these books the information on the convergence of the empirical spacing distribution is somewhat sparse, except for \cite{deift1} and \cite{katzsarnak} in the case of unitary ensembles ($\beta=2$). It is one of the goals of this paper to give a concise and largely self-contained update of \cite{deift1}, \cite{katzsarnak} w.r.t.~spacing distributions including also orthogonal ($\beta=1$) and symplectic ($\beta=4$) ensembles. 

The paper is organised as follows.
First, we introduce in section \ref{Tsec:2} three important types of matrix ensembles that generalize the classical Gaussian ensembles. 
In order to define our prime object of study, the empirical spacing distribution (section \ref{sec:3_2}), we first discuss the spectral limiting density for all three types of ensembles in section \ref{sec:3_1}. From section  \ref{Tsec:4} to  \ref{sec:result} we only treat invariant ensembles. We first state how $k$-point correlations are related to orthogonal polynomials and recall what is known 
about their convergence (section  \ref{Tsec:4}). These results are used in section \ref{sec:PointwiseConvergence} to sketch the proof of a first convergence result (\ref{sec5:eq1}) for the empirical spacing distribution. Our main new result Theorem \ref{main_theo}, that is proved in \cite{Diss}, is stated in section \ref{sec:result} together with related results for circular ensembles from the literature. We close by mentioning numerical results of \cite{Diss}. They indicate that a version of the Central Limit Theorem, similar to the one proved in  \cite{soshnikov}  for COE and CUE, should also hold for the ensembles discussed in this paper. 

\section{Random matrix ensembles}
\label{Tsec:2}

Starting with the classical Gaussian ensembles we introduce in the present section three different types of generalisations, Wigner ensembles, Invariant ensembles, and $\beta$-ensembles. Together they constitute a large part of the ensembles studied in Random Matrix Theory. Some references are provided where the reader can learn about the main techniques to analyse these ensembles. The concept of symmetry classes is briefly discussed.

We begin by defining one of the most prominent matrix ensembles, the Gaussian Unitary Ensemble (GUE).
GUE is a collection of probability measures on Hermitian $N \times N$ matrices $X$, $N \in
\mathbb N$, where the diagonal entries $x_{jj}$ and the real and imaginary parts of the upper triangular entries
$x_{jk} = u_{jk} + i v_{jk}$, $j<k$ are all independent and normally distributed with $x_{jj} \sim {\mathcal N}(0, 1/\sqrt{N})$, 
$u_{jk}$, $v_{jk} \sim {\mathcal N}(0, 1/\sqrt{2N})$. GUE has the following useful properties.
\begin{itemize}
 \item[1.] The entries are independent as far as the Hermitian symmetry permits.
 \item[2.] The probability measure is invariant under conjugation by matrices of the unitary group, i.e.~under change of 
    orthonormal bases. In fact, this explains why the ensemble is called ``unitary'' (and the reference to Gauss is 
    due to the normal distribution).   
    Moreover, one can compute the joint distribution of the eigenvalues explicitly. The vector of 
    eigenvalues $(\lambda_1, \ldots, \lambda_N)$ is distributed on $\mathbb R^N$ with Lebesgue-density
      \begin{equation} \label{T2.5}
         Z_{N, \beta}^{-1} \prod_{j<k} |\lambda_k - \lambda_j|^\beta \prod_{i} e^{-\beta N \lambda_i^2/4} \ d\lambda_i , \quad \beta =2,
      \end{equation}
    where $Z_{N, \beta}$ denotes some norming constant.
\end{itemize}
Each of these properties comes with a set of techniques to analyse statistics of eigenvalues. In turn, these techniques 
can be applied to a large number of matrix ensembles that share this particular property. More precisely:
\begin{itemize}
 \item[1.] {\em Wigner ensembles} have independent entries as far as the symmetry of the matrix permits. 
The distributions of the entries do not need to be normal or identical, but must satisfy some conditions on the moments.
Except for the Gaussian case, Wigner ensembles are not unitarily invariant and the joint distribution of eigenvalues is generally not known.
Many results for such ensembles (e.g.~Wigner semi circle law, distribution of the largest eigenvalue) can be obtained 
via the {\em method of moments}, i.e.~by analysing the moments of the empirical measure of the eigenvalues (see e.g.~\cite{hiaipetz}, cf.~sect.~\ref{Tsec:3}). More recently, 
very powerful new techniques have been introduced by Erd\"os et al and independently  by Tao and Vu (see e.g.~\cite{Erdos2, TaoVu} and references therein).

\item[2.] {\em Invariant ensembles} keep the property of invariance under conjugation by unitary matrices. The ensembles considered in this
class all have in common that the joint distribution of eigenvalues is given by a measure of the form 
\begin{equation} \label{T2.10}
  Z_{N, \beta}^{-1} \prod_{j<k} |\lambda_k - \lambda_j|^\beta \prod_{i} d\mu_N(\lambda_i) , \quad \beta = 2
\end{equation}
where $\mu_N$ denotes some (positive) finite measure on $\mathbb R$ with sufficient decay at infinity to guarantee finiteness 
of the measure on $\mathbb R^N$. As we  explain in section~\ref{Tsec:4} it is exactly the structure of (\ref{T2.10}),
i.e.~a product measure with dependencies introduced by the square of the Vandermonde determinant, for which 
the {\em method of orthogonal polynomials} can be applied. Note that 
such measures, with $\mu_N$ supported on discrete sets, were also central for proving the appearance of local eigenvalue statistics in some
of the models from statistical mechanics described in the Introduction
(see e.g.~\cite{TKrKr} for an elementary exposition in the case of interacting particle systems).
\end{itemize}

Using these two types of generalisations of GUE we may already generate a great number of matrix ensembles. These consist of Hermitian matrices only and we say that they belong to the same symmetry class. By the universality conjecture we expect that in the limit $N \to \infty$ all these ensembles display the same local spectral statistics. 

If one replaces in the definition of GUE above the Hermitian matrices by real symmetric resp.~by quaternion self-dual matrices, keeping the independence of the entries as well as their normal distributions (with appropriately chosen variances), one obtains the Gaussian Orthogonal Ensemble (GOE) resp.~the Gaussian Symplectic Ensemble (GSE). These ensembles can be generalised as above, yielding again Wigner ensembles or invariant ensembles and the only difference compared to the discussion above is that in (\ref{T2.5}), (\ref{T2.10}) we have to choose $\beta = 1$ resp.~$\beta = 4$. In this way we have introduced two more symmetry classes which then together constitute all classes from Dyson's threefold way. As it was discovered some 30 years after Dyson's classification result from 1962, it is useful and natural to enlarge this list to a grand total of 10 symmetry classes, thus providing a significant increase in applications of Random Matrix Theory in physics, statistics and mathematics alike (see \cite{Handbook_Zirnbauer} for a recent survey). It should be noted that from the perspective of invariant matrix ensembles the resulting joint distributions of the eigenvalues are of the form (\ref{T2.10})
with $\beta \in  \{ 1, 2, 4\}$ for all ten symmetry classes. As we will argue in section \ref{sec:result} there exists a large class of invariant ensembles from all 10 symmetry classes for which the localised and appropriately rescaled empirical spacing distributions (see section \ref{Tsec:3}) converge to universal limits that only depend on the value of $\beta$. The {\em method of orthogonal polynomials} mentioned above can also be applied for $\beta = 1$, $4$. However, it is more technical and its range of applicability is less general than in the case $\beta =2$, see e.g.~\cite{deift2}.

There is a third property of Gaussian ensembles that leads to a different type of generalisation. The basic observation is the following. If one applies the
Householder transformation to GOE in a suitable way, one obtains probability measures on $N\times N$ Jacobi matrices 
 (i.e.~real symmetric, tridiagonal matrices with positive off-diagonal entries). By construction they induce the same joint distributions
of eigenvalues as (\ref{T2.5}) with $\beta = 1$. For this ensemble, the entries are again independent (as symmetry permits) with normal 
distributions on the diagonal and some $\chi$-distributions for the off-diagonal entries.
General {\em $\beta$-ensembles} are now generated by modifying the variances of the $\chi$-distributions on the off-diagonals. For any $\beta > 0$ this can be done in such a way that the joint distribution of eigenvalues is given
by (\ref{T2.5}) with the prescribed value of $\beta$. A key insight into the analysis of these ensembles is that for 
large matrix dimensions eigenvalues of the Jacobi matrices may be approximated by the spectrum 
of a specific stochastic Schr\"odinger operator, see e.g.~\cite{TRaRiVi}. 
Note that the local eigenvalue statistics of $\beta$-ensembles are different for each value of $\beta$. Obviously, they reduce to the classical Gaussian ensembles if and only if $\beta \in  \{ 1, 2, 4\}$.

\section{The empirical spacing distribution - localised and rescaled}
\label{Tsec:3}

In this section we define the empirical spacing distribution as one prime example for local eigenvalue statistics. By the latter we mean, firstly,
that the spectrum is localised by considering only some part of the spectrum and, secondly, that the spectrum is being rescaled such that the average distance between neighbouring eigenvalues is constant and of order 1 in the considered spectral region. In order to perform such operations we must first understand the {\em limiting spectral density} of the ensemble.

\subsection{The limiting spectral density}\label{sec:3_1}
We denote the ordered eigenvalues of a matrix $H$ from one of the ensembles described in section \ref{Tsec:2} by $\lambda_1^{(N)}(H) \le \lambda_2^{(N)}(H) \le \ldots \le \lambda_N^{(N)}(H)$. The corresponding $N$-tuple $\lambda^{(N)}(H)$ thus defines a point in the Weyl chamber that we denote by 
$\mathcal{W}_N \coloneqq \{x \in \mathbb R^N: \,  x_1\leq \ldots \leq x_N\}$. Moreover, we abbreviate $\lambda_j^{(N)}(H)$ by $\lambda_j$ from now on to keep the notation manageable. 

We associate to each $\lambda \in \mathcal{W}_N$ its counting measure $\delta_{\lambda} \coloneqq \frac{1}{N} \sum_{j=1}^N \delta_{\lambda_j}$ which defines a probability measure on $\mathbb R$. By the {\em limiting spectral density} we mean a function  $\psi: \mathbb R \to [0, \infty )$ satisfying for all $s \in \mathbb R$ that
\begin{align*}
\mathbb E_{N,\beta} 
\left(  \int_{-\infty}^{s} \, d\delta_{\lambda} \right) \to \int_{-\infty}^s \psi(t) \, dt \quad \text{ as } N \to \infty \, .
\end{align*}
It is known for ample classes of both Wigner ensembles and invariant ensembles as well as for $\beta$-ensembles that the spectral density exists. For Wigner ensembles one can show mainly by combinatorial methods that on average the moments of $\delta_{\lambda}$ converge to the moments of the semi-circle distribution (Wigner semi-circle law). The first steps of the proof are provided by the simple observation that for $k \in \mathbb N$ one has
\begin{align*}
\mathbb E_{N,\beta} 
\left(  \int_{\mathbb R}  t^k \, d\delta_{\lambda} (t) \right) = \frac{1}{N}\mathbb E_{N,\beta} (\mbox{tr }(H^k) )
\end{align*}
together with an expansion of the right hand side as a sum of expectations of products of entries of $H$ that can be simplified by using the independence of the entries (method of moments, see e.g.~\cite{hiaipetz}). 

Next we turn to $\beta$-ensembles. Here the limiting spectral density is again given by the Wigner semi-circle law. The proof, however, follows a different path. Recall that the joint distribution of eigenvalues is given by (\ref{T2.5}). Its density can therefore be rewritten in the form
\begin{align}
\label{T3.5}
Z_{N, \beta}^{-1} \exp\left[- \beta N^2 I(\delta_{\lambda}) \right] \, \, \, \mbox{with} \, \, \, 
I(\nu) := - \frac{1}{2} \int_{x \ne y} \!\!\!\!\!\!\!\!\log | x-y | d\nu(x) d\nu(y) + \int \frac{x^2}{4} d\nu(x) \, .
\end{align}
We may think of $I$ as a functional defined on all probability measures on $\mathbb R$. It is a well known fact in logarithmic potential theory that $I$ has a unique minimizer that is given by the semi-circle law. Since we have the factor $N^2$ in the exponent in (\ref{T3.5}) it is intuitively clear that for large $N$ only those vectors $\lambda$ will be relevant for which the corresponding counting measure $\delta_{\lambda}$ is close to the minimizer of $I$. This idea can be used to prove the Wigner semi-circle law for $\beta$-ensembles. Moreover, this idea can also be applied to prove the existence of the limiting spectral density for a large class of invariant ensembles (see e.g.
\cite{TJo98}). Indeed, let us assume that in (\ref{T2.10}) the measure $d\mu_N$ has a Lebesgue-density of the form
\begin{align}\label{T3.15}
d\mu_N(x) = e^{-N V(x)}dx \quad \mbox{satisfying} \quad \lim_{|x| \to \infty} \, \, \frac{V(x)}{\log |x|} = \infty \, , 
\end{align}
in order to guarantee that the measure (\ref{T2.10}) is finite. Under mild regularity assumptions on $V$ one can proof that the functional
\begin{equation}\label{midnight}
I_V(\nu) \coloneqq - \frac{1}{2} \int_{x \ne y} \!\!\!\!\log | x-y | d\nu(x) d\nu(y) + \int V(x) d\nu(x) \, ,
\end{equation}
defined on the probability measures on $\mathbb R$ has an unique minimizer $\nu_V$ with a Lebesgue-density $\psi = \psi_V$. As argued above, one can show that $\psi$ is the limiting spectral density of the ensemble (see e.g.~\cite[chap. 6]{deift1} for an elementary exposition). In the literature on invariant ensembles one also finds a slightly more general setting where in the formula (\ref{T3.15}) for the density of $d\mu_N$ the function $V$ is replaced by $N$-dependent functions $V_N$ that converge to some function $V$ satisfying the growth condition (\ref{T3.15}).

Note, that for invariant ensembles the limiting spectral density depends on $V$ and is therefore not an universal quantity. This is not a contradiction to the universality conjecture of Dyson since the limiting spectral density is a global quantity whereas the universality conjecture only refers to local eigenvalue statistics. 

\subsection{The empirical spacing distribution}\label{sec:3_2}
We use the limiting spectral density in order to rescale the eigenvalues. Let $a$ denote a point in the interior of the support of $\psi$ where the limiting density is positive, i.e.~$\psi(a)>0$. We assume further that $a$ is a point of continuity for $\psi$.  For eigenvalues $\lambda_i$ that are close to $a$ the expected distance of neighbouring eigenvalues is given to leading order by $(N \psi(a))^{-1}$. Therefore we introduce the rescaled and centred eigenvalues
\begin{align}\label{T3.20}
		\widetilde{\lambda}_i\coloneqq  ( \lambda_i -a) N\psi(a).
	\end{align}
Considering only eigenvalues $\lambda_i$ that lie in an ($N$-dependent) interval $I_N$ that is centred at $a$ and has vanishing length
$|I_N| \to 0$ for $N \to \infty$, we expect that their rescaled versions $\widetilde{\lambda}_i$ have a spacing that is close to $1$ on average.
We introduce
	\begin{equation*} 
		A_N\coloneqq  N \psi(a)(I_N-a)=\{N \psi(a)(t-a) \mid t \in I_N \}
	\end{equation*}
and observe that $\lambda_i \in I_N$ if and only if $\widetilde{\lambda}_i \in A_N$. Therefore and by the expected unit spacing of the rescaled eigenvalues  we conclude that the length of $A_N$ gives the average of the number of eigenvalues $\lambda_i$ that lie in $I_N$ to leading order. For our considerations we assume that this number and hence $ N |I_N|= |A_N|/\psi(a)$ tends to infinity for $N \to \infty$. We summarize our assumptions on the length of $I_N$.
	\begin{equation}\label{T3.25}
	|I_N| \to 0 \, , \quad \quad  N |I_N|  \to \infty \quad \quad \text{ for } N \to \infty.
	\end{equation}
Finally, we define our main object of interest, the empirical spacing distribution. As above we denote the eigenvalues of a random matrix H by $\lambda_1 \leq \ldots \leq \lambda_N$ and their rescaled versions (\ref{T3.20}) by $\widetilde{\lambda}_1 \leq \ldots \leq \widetilde{\lambda}_N$. Furthermore, let $I_N$ be an interval centred at $a$ and satisfying (\ref{T3.25}). Then the empirical spacing distribution for $H$, localised in $I_N$, is given by
\begin{equation}\label{def_sigma}
\sigma_N(H) \coloneqq \frac{1}{|A_N|} \sum_{\lambda_{i+1},\lambda_i \in I_N} \delta_{\widetilde{\lambda}_{i+1}-\widetilde{\lambda}_{i}}.
\end{equation}
Recall from the discussion above that the expected number of spacings considered in $\sigma_N(H)$ is given by $|A_N|-1$. This explains
the pre-factor $1/|A_N|$ in the definition of $\sigma_N(H)$, which is asymptotically the same as $1/(|A_N|-1)$.

\section{Universality of the $k$-point correlation functions for invariant ensembles}
\label{Tsec:4}
In this section we state results on the convergence of $k$-point correlation functions for invariant ensembles, as well as their connection to orthogonal polynomials. 

We recall that we consider invariant ensembles where the joint distribution of the eigenvalues has a density of the form (see (\ref{T2.10}) and (\ref{T3.15}))
	\begin{equation}\label{joint_density}
	   P_{N}^{(\beta)}(\lambda_1, \ldots, \lambda_N)\coloneqq \frac{1}{Z_{N,\beta}}\prod_{i<j}|
	   \lambda_j-\lambda_i|^{\beta} \prod_{k=1}^N w_N^{(\beta)}( \lambda_k), \quad \lambda \in \mathbb R^N 
	\end{equation}
with $w_{N}^{(\beta)}(x)=e^{-NV(x)}$.
In the proof of the main theorem (Theorem \ref{main_theo}) we will use asymptotic results for the  marginal densities of $P_{N}^{(\beta)}$ with respect to $k$ variables. 
The latter are called the $k$-point correlation functions, for which  we will now give a precise definition. 
\begin{definition}\label{defR} \text{}
		\begin{enumerate}
		\item[(i)] 
			For $k \in \mathbb{N}, k\leq N$, $\beta\in \{1,2,4\}$ and $(\lambda_1,\ldots, \lambda_k) \in \mathbb R^k$ 
			we set 
			   $$ 
			      R_{N,k}^{(\beta)}(\lambda_1, \ldots, \lambda_k)\coloneqq\frac{N!}{(N-k)!}   
			      \int_{\mathbb{R}^{N-k}} 
						P_{N}^{(\beta)}(\lambda_1,\ldots, \lambda_N) \, 
						d\lambda_{k+1}\ldots d\lambda_N.
				 $$ 
		\item[(ii)] 
			For $k \in \mathbb{N}, k\leq N$ and $\beta\in \{1,2,4\}$ the rescaled $k$-point correlation functions are given by
		\begin{align*} 
				B_{N,k}^{(\beta)}(\widetilde{\lambda}_1, \ldots, \widetilde{\lambda}_k)& \coloneqq \left( N \psi(a) 
				\right)^{-k} 
				R_{N,k}^{(\beta)} \left(a+\frac{\widetilde{\lambda}_1}{N\psi(a)}, \ldots,a+\frac{\widetilde{\lambda}_k}
				{N\psi(a)} \right)\\
			     &=\left( N \psi(a) \right)^{-k} 
				R_{N,k}^{(\beta)} \left(\lambda_1, \ldots,\lambda_k \right).
\end{align*}
		\end{enumerate}
\end{definition}
We observe that $R^{(k)}_{N,k}(t_1,\ldots,t_k)$ and $B_{N,k}^{(\beta)}(t_1,\ldots,t_k) $ are invariant under  permutations of the 
 indices $\{1, \ldots, k \}$.

We now sketch how the $k$-point correlation functions can be analysed using the method of orthogonal polynomials. We start with the simplest case $\beta=2$. 
Define $K_{N,2}\colon \mathbb R^2 \to \mathbb R$ 
with
	\begin{equation}\label{OP_1}
	   K_{N,2}(x,y) \coloneqq \sum_{j=0}^{N-1} \varphi_j^{(N)}(x)\varphi_j^{(N)}(y),
	\end{equation}
	\begin{equation*} 
	   \varphi_j^{(N)}(x) \coloneqq p_j^{(N)}(x) \sqrt{w^{(2)}_{N}(x)}, 
	\end{equation*}
and $p_j^{(N)}(x)=\gamma_j^{(N)}x^j+\ldots$ with $\gamma_j^{(N)} >0$ denotes the $j$-th normalised orthogonal 
polynomial with respect to the measure $w^{(2)}_{N}(x)dx$ on $\mathbb{R}$, i.e.\  
	$$\int_{\mathbb{R}}p_j^{(N)}(x)p_k^{(N)}(x)w^{(2)}_{N}(x)dx= \delta_{jk}.$$
The convergence of the appropriately rescaled kernel $K_{N,2}$
\begin{equation}\label{conv_sine_kernel}
 \lim_{N \to \infty }\frac{1}{N \psi(a)} K_{N,2}\left(a+\frac{x}{N \psi(a)},a+\frac{y}{N 
\psi(a)}\right)=\frac{\sin(\pi (x-y))} 
{\pi (x-y)} \eqqcolon K_2(x,y)
\end{equation}
has by now been proved in quite some generality (see e.g.\ \cite{Lub} and references therein). 
Usually uniform convergence of (\ref{conv_sine_kernel}) is only shown for $x,y$ in bounded sets. For our purposes it is convenient to extend this result for $x,y$ in the growing set $A_N$.
%
%
%
\begin{theorem}[c.f.\  \cite{DeiftKriecherbauer}, \cite{Diss}] \label{theo_Kn2}
Let $V \colon \mathbb R \to \mathbb R$ be real analytic such that  (\ref{T3.15}) holds and let $V$ be regular in the sense of \cite[(1.12),(1.13)]{DeiftKriecherbauer}. Moreover, we assume $a \in \mathbb R$ with $\psi(a)>0$
($\psi$ being defined as the density of the minimizer of $I_V$, see (\ref{midnight})). 
Let $(c_N)_{N \in \mathbb N}$ be a sequence satisfying $c_N \to \infty , \frac{c_N}{N} \to 0$ as $N \to \infty$. Then we have for $N \to \infty$
\begin{equation}\label{theo_Kn2:eq1}
 \sup_{x,y \in [-c_N,c_N]} \left|\frac{1}{N \psi(a)} K_{N,2}\left(a+\frac{x}{N \psi(a)},a+\frac{y}{N 
\psi(a)}\right)-K_2(x,y)\right|= \mathcal{O} \left( \frac{c_N}{N} \right)
\end{equation}
\begin{equation}\label{theo_Kn2:eq2}
\sup_{x,y \in [-c_N,c_N]} \left| \frac{\partial}{\partial x} \left( \frac{1}{N \psi(a)} K_{N,2}\left(a+\frac{x}{N \psi(a)},a+\frac{y}{N 
\psi(a)}\right)-K_2(x,y) \right)  \right|
 =  \mathcal{O} \left( \frac{c_N}{N} \right).
\end{equation}
\end{theorem}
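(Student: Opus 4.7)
The plan is to refine the Riemann-Hilbert steepest-descent asymptotics of \cite{DeiftKriecherbauer}, tracking carefully how the remainder depends on the location of the evaluation points within the bulk. The starting point is the Christoffel-Darboux identity
$$K_{N,2}(u,v) = \frac{\gamma_{N-1}^{(N)}}{\gamma_N^{(N)}}\,\frac{\varphi_N^{(N)}(u)\,\varphi_{N-1}^{(N)}(v) - \varphi_{N-1}^{(N)}(u)\,\varphi_N^{(N)}(v)}{u-v},$$
which reduces the problem to bulk asymptotics for $\varphi_N^{(N)}$ and $\varphi_{N-1}^{(N)}$. Under the hypotheses on $V$, the standard Deift-Zhou steps applied to the Fokas-Its-Kitaev Riemann-Hilbert problem (normalisation via the $g$-function, opening of the lens, global parametrix from the equilibrium measure, local bulk parametrix near $a$ built from trigonometric expressions) leave a small-norm problem for a correction matrix $R$ satisfying $R(z) = I + \mathcal{O}(1/N)$ uniformly on a fixed disk $D(a,\delta)$ on which $\psi > 0$ and $V$ is analytic.

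Substituting the resulting formulas into the Christoffel-Darboux identity and rescaling $u_x \coloneqq a + x/(N\psi(a))$, $u_y \coloneqq a + y/(N\psi(a))$ produces an expression of the schematic form
$$\frac{1}{N\psi(a)}K_{N,2}(u_x,u_y) = \frac{\sin\bigl(N\phi(u_x) - N\phi(u_y) + \Delta(u_x,u_y)\bigr)}{\pi(x-y)} + E_N(x,y),$$
where $\phi$ is the analytic phase built from the $g$-function (with $\phi'(a) = \pi\psi(a)$), $\Delta$ is a smooth lower-order correction coming from the amplitudes, and $E_N$ collects the contributions of $R - I$. Since $c_N/N \to 0$, for $|x|,|y| \le c_N$ and $N$ large both $u_x$ and $u_y$ lie in $D(a,\delta/2)$, where Cauchy estimates yield $\|R - I\|_\infty = \mathcal{O}(1/N)$ and $\|\partial_z(R-I)\|_\infty = \mathcal{O}(1/N)$. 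Hence $E_N(x,y) = \mathcal{O}(1/N) = \mathcal{O}(c_N/N)$ uniformly on the window (using $c_N \to \infty$).

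The improvement from the naive bound to the sharp rate $\mathcal{O}(c_N/N)$ for the main term requires a cancellation. Taylor expansion of the phase gives
$$N\phi(u_x) - N\phi(u_y) = \pi(x-y) + \frac{\pi\psi'(a)}{2\psi(a)^2}\cdot\frac{x^2 - y^2}{N} + \mathcal{O}(c_N^3/N^2),$$
so a pointwise bound on the deviation would only produce $\mathcal{O}(c_N^2/N)$, which is insufficient. However the leading deviation factors as $\text{const}\cdot(x+y)(x-y)/N$, and the factor $(x-y)$ cancels against the Christoffel-Darboux denominator $u_x - u_y = (x-y)/(N\psi(a))$. Only the factor $x+y = \mathcal{O}(c_N)$ survives, so the error of replacing the main term by $\sin(\pi(x-y))/(\pi(x-y)) = K_2(x,y)$ is of the stated size $\mathcal{O}(c_N/N)$. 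The same antisymmetric mechanism absorbs the lower-order contributions of $\Delta$ and of the amplitudes into the desired rate.

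The derivative estimate (\ref{theo_Kn2:eq2}) is handled analogously: differentiating the Christoffel-Darboux identity in $u$, using the three-term recurrence to express $\varphi_N^{(N)\prime}$ through $\varphi_N^{(N)}$ and $\varphi_{N-1}^{(N)}$, and invoking $\partial_z R = \mathcal{O}(1/N)$ from Cauchy, introduces at most one extra factor $N$ from differentiating the oscillatory exponentials $e^{\pm i N\phi}$; this is exactly compensated by the additional prefactor $1/(N\psi(a))$ produced by the chain rule in the rescaled variable. The cancellation argument of the previous paragraph then applies verbatim. The main difficulty is therefore not in any single estimate but in the bookkeeping: one must organise the full asymptotic expansion of the Christoffel-Darboux numerator so that every contribution that is only $\mathcal{O}(c_N^2/N)$ pointwise appears in antisymmetric form $f(x) - f(y)$ against which $u_x - u_y$ can cancel, while the surviving symmetric errors are already of the correct order $\mathcal{O}(c_N/N)$.
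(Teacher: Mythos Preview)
The paper does not present a self-contained proof of this theorem; the Remark immediately following the statement explains that the argument is the Riemann--Hilbert steepest-descent analysis of \cite{DeiftKriecherbauer}, reorganised along the ``efficient path'' of \cite{Van}, with the extension to growing windows and to the derivative estimate deferred to \cite{Diss}. Your proposal follows precisely this route and correctly isolates the key mechanism: the phase difference $N\phi(u_x)-N\phi(u_y)$ is by construction of the form $f(x)-f(y)$ and hence carries a factor $(x-y)$, which cancels the Christoffel--Darboux denominator and turns a naive $\mathcal{O}(c_N^2/N)$ pointwise error into the sharp $\mathcal{O}(c_N/N)$; likewise the $R-I=\mathcal{O}(1/N)$ contribution is harmless because it enters through $Y(u_y)^{-1}Y(u_x)-I$, which is analytic and vanishes on the diagonal. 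One minor correction: the three-term recurrence relates $\varphi_{N+1},\varphi_N,\varphi_{N-1}$ but does not express the \emph{derivative} $\varphi_N'$ in terms of $\varphi_N,\varphi_{N-1}$; for (\ref{theo_Kn2:eq2}) one either invokes the differential--difference (ladder) relations for orthogonal polynomials, or---more in keeping with the approach of \cite{Van} that the paper cites---simply differentiates the Riemann--Hilbert representation of the kernel directly and uses the Cauchy estimate $\partial_z R=\mathcal{O}(1/N)$ that you already mention.
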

%
 %
\begin{remark}\text{}
\begin{enumerate} 
\item[(i)]
The estimate in (\ref{theo_Kn2:eq2})  will be needed to treat the cases $\beta=1$ and $\beta=4$.
\item[(ii)] The proof of Theorem \ref{theo_Kn2} is essentially contained in \cite{DeiftKriecherbauer} although not stated explicitly (a formula somewhat close is presented in \cite[(6.18)]{DeiftKriecherbauer}).
In particular, there is no information on the derivatives in (\ref{theo_Kn2:eq2}). Nevertheless the underlying Riemann-Hilbert analysis also provides (\ref{theo_Kn2:eq1}) and (\ref{theo_Kn2:eq2}), where we use an efficient path, which we have taken from \cite{Van}.
A sketch of the required refinements and the extension to unbounded sets can be found in \cite{Diss}. 
\item[(iii)] To unify the notation with the cases $\beta=1$ and $\beta=4$ treated below we set 
\begin{equation}\label{rescale_K2}
\widehat{K}_{N,2}(x,y)\coloneqq \frac{1}{N \psi(a)} K_{N,2}(x,y)
\end{equation}
and hence (\ref{theo_Kn2:eq1}) reads
\begin{equation}\label{theo_Kn2:eq4}
\widehat{K}_{N,2}\left(  a+\frac{x}{N \psi(a)}, a +\frac{y}{N \psi(a)}  \right) = 	
	K_{2} (x,y) + \mathcal{O} \left( \kappa_N \right)
\end{equation}
with $\kappa_N=\frac{c_N}{N} \to 0$ as $N \to \infty$. The error term is uniform for  $x,y \in [-c_N,c_N]$.
\end{enumerate}
\end{remark}
Theorem \ref{theo_Kn2} can be used to derive some results about the rescaled correlation function, where one uses a well known determinantal formula expressing $B_{N,k}^{(2)}$ in terms of $K_{N,2}$ (see e.g.\  \cite{deift1} and Lemma \ref{lemma_B2} below).  Observe  that in the  considered setting  the term $\mathcal{O}\left( \frac{c_N}{N}\right)$ in the asymptotic behaviour of $K_{N,2}$ (see Theorem \ref{theo_Kn2}) is replaced by $\mathcal{O}(|I_N|)$ in statement (iii) of Lemma \ref{lemma_B2}.
%
	\begin{lemma}\label{lemma_B2}
		Let the assumptions of Theorem \ref{theo_Kn2} be satisfied. 		
		Furthermore,  let  
       $a,\psi, I_N$, $A_N$ be defined as in section \ref{Tsec:3}. 
       Then the following holds
			\begin{enumerate}
				\item[(i)] For $t_1, \ldots, t_k \in \mathbb{R}$  we have 
					$$B_{N,k}^{(2)}(t_1, \ldots, t_k)=(N \psi(a))^{-k} \det\left(
					K_{N,2}\left(a+\frac{t_i}{N\psi(a)},a+\frac{t_j}{N\psi(a)}\right)
					 \right)_{1 \leq i,j\leq k},$$
					 where $K_{N,2}$ is given in (\ref{OP_1}). 
				\item[(ii)]
				 	For $N$ sufficiently large we have for all $k \leq N$
					$$|B_{N,k}^{(2)}(t_1, \ldots, t_k)|\leq 2^k \quad \text{ for } t_1, \ldots, t_k \in 
					A_N.$$ 
				\item[(iii)] For  $t_1, \ldots, t_k \in A_N$   we have
				 	\begin{equation}
				 	\label{Convercence_Bk}	
				 	B_{N,k}^{(2)}(t_1, \ldots, t_k)=W_k^{(2)}(t_1,\ldots,t_k)+k! \cdot k\cdot 2^k
					 \mathcal{O}(|I_N|),
					 \end{equation}
					 with 
					 \begin{equation}\label{Def_W2}
					 W_k^{(2)}(t_1,\ldots,t_k) \coloneqq \det \left( \frac{\sin(\pi(t_i-t_j))}
					{\pi(t_i-t_j)} \right)_{1 \leq i,j \leq k}
					 \end{equation}
					 and the constant implicit in the error term in (\ref{Convercence_Bk}) is uniform in
					  $k$, $t_1,\ldots, t_k$  and in~$N$.					
				\end{enumerate}
	\end{lemma}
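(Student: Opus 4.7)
For part (i), the plan is to carry out the standard Gaudin--Mehta reduction for $\beta=2$. Row operations convert the Vandermonde factor into a determinant of the $p_j^{(N)}$, and absorbing a factor of $\sqrt{w_N^{(2)}}$ into each column yields
\[
\prod_{i<j}|\lambda_j-\lambda_i|^2 \prod_k w_N^{(2)}(\lambda_k) \;\propto\; \det\bigl(\varphi_{i-1}^{(N)}(\lambda_j)\bigr)_{1\le i,j\le N}^2 = \det\bigl(K_{N,2}(\lambda_i,\lambda_j)\bigr)_{1\le i,j\le N},
\]
where the last identity is $(\det A)^2 = \det(A^{T}\!A)$. The $L^2$-orthonormality of the $\varphi_j^{(N)}$ pins down the constant of proportionality as $1/N!$. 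Iterating the projection-kernel identities $\int K_{N,2}(x,z)K_{N,2}(z,y)\,dz = K_{N,2}(x,y)$ and $\int K_{N,2}(z,z)\,dz = N$ then integrates out $N-k$ variables and produces $R_{N,k}^{(2)}(\lambda_1,\ldots,\lambda_k) = \det(K_{N,2}(\lambda_i,\lambda_j))_{1\le i,j\le k}$; rescaling via Definition \ref{defR}(ii) gives (i).

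For part (ii), the key observation is that the matrix $M_N \coloneqq \bigl(\widehat{K}_{N,2}(a+t_i/(N\psi(a)),a+t_j/(N\psi(a)))\bigr)_{1\le i,j\le k}$ is positive semi-definite, since $(K_{N,2}(x_i,x_j))_{i,j}$ is the Gram matrix of the vectors $(\varphi_0^{(N)}(x_i),\ldots,\varphi_{N-1}^{(N)}(x_i))\in\mathbb R^N$. Hadamard's inequality for PSD matrices then gives $\det(M_N) \le \prod_i (M_N)_{ii}$. To bound the diagonal entries I would apply Theorem \ref{theo_Kn2} with the choice $c_N \coloneqq |A_N|/2 = \tfrac12 N\psi(a)|I_N|$, which by (\ref{T3.25}) satisfies $c_N \to \infty$ and $c_N/N \to 0$. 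For $t \in A_N$ the diagonal entry then equals $K_2(t,t) + \mathcal O(c_N/N) = 1 + o(1) \le 2$ for $N$ large, which yields $B_{N,k}^{(2)} = \det(M_N) \le 2^k$.

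For part (iii), I would expand both $\det(M_N)$ and $\det(K_2(t_i,t_j))$ via the Leibniz formula and compare them termwise. Theorem \ref{theo_Kn2} gives $|(M_N)_{ij} - K_2(t_i,t_j)| \le \epsilon_N$ uniformly for $t_i, t_j \in A_N$ with $\epsilon_N = \mathcal O(c_N/N) = \mathcal O(|I_N|)$, where we use $c_N = |A_N|/2$ and $|A_N|= N\psi(a)|I_N|$. Using the product-difference telescoping
\[
\prod_{i=1}^k a_i - \prod_{i=1}^k b_i = \sum_{j=1}^k (a_j-b_j)\prod_{i<j} a_i \prod_{i>j} b_i,
\]
with the uniform entry bound $\le 2$ (from part (ii) applied to the $a_i$ and $|K_2|\le 1$ for the $b_i$), each Leibniz term is bounded by $k\cdot \epsilon_N\cdot 2^{k-1}$. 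Summing over the $k!$ permutations yields $|\det(M_N)-\det(K_2(t_i,t_j))| \le k!\cdot k\cdot 2^{k-1} \epsilon_N$, which is the stated $k!\cdot k\cdot 2^k \cdot\mathcal O(|I_N|)$ bound, with the required uniformity in $k$, in the $t_i$'s, and in $N$ inherited from the uniformity of $\epsilon_N$.

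None of the three parts is conceptually deep; they are direct consequences of Theorem \ref{theo_Kn2} together with classical determinantal identities, and the hard analytic work is done by Theorem \ref{theo_Kn2} itself. The only bookkeeping point is to choose $c_N = |A_N|/2$ so that Theorem \ref{theo_Kn2} applies on the growing window $A_N$, and to observe that the resulting $c_N/N$ error is genuinely $\mathcal O(|I_N|)$ rather than merely $o(1)$; both follow from (\ref{T3.25}) and $|A_N|=N\psi(a)|I_N|$.
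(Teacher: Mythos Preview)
The paper does not supply a proof of this lemma; it is stated as an immediate consequence of Theorem~\ref{theo_Kn2} together with the classical determinantal formula for $R_{N,k}^{(2)}$ (for which the reader is referred to \cite{deift1}). Your argument is correct and fills in precisely the standard details the paper omits: the Gaudin--Mehta reduction for (i), Hadamard's inequality for positive semi-definite matrices for (ii), and a Leibniz expansion with termwise telescoping for (iii).

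One small slip in the write-up of (iii): the parenthetical ``from part (ii) applied to the $a_i$'' is not quite right, since part (ii) bounds the determinant, not the individual entries. The entrywise bound $|(M_N)_{ij}|\le 2$ that you actually use in the telescoping comes directly from Theorem~\ref{theo_Kn2}, which gives $|(M_N)_{ij}| \le |K_2(t_i,t_j)| + \epsilon_N \le 1 + o(1)$ uniformly on $A_N$. With that correction the argument is clean.
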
	
It should be noted that with Lemma \ref{lemma_B2} we have derived all information on the convergence of the $k$-point correlation functions that is needed to prove the main result Theorem \ref{main_theo}  for $\beta=2$.

We now turn to the cases $\beta=1,4$. For technical reasons we restrict the discussion of the case $\beta=1$ to even values of $N$. Our presentation follows closely the monograph \cite{deift2}.
Similar to statement (i) of Lemma \ref{lemma_B2} the $k$-point correlation function for $\beta=1$ and $\beta=4$ can be represented in terms of functions $S_{N,\beta}$, which are related to $K_{N,2}$.
It is convenient to express the correlation functions in terms of the Pfaffian. We remind the reader that for real skew-symmetric $2m \times 2m$ matrices the determinant is a perfect square. Consequently, the Pfaffian which is defined to be the square-root of the determinant for such matrices can be expressed as a polynomial in the entries. Indeed, 
$$\text{Pf} (A) = \frac{1}{2^m m!} \sum_{\sigma \in S_{2m}} (\text{sgn} \sigma)  a_{\sigma_1 \sigma_2 } a_{\sigma_3 \sigma_4 } \ldots a_{\sigma_{2m-1} \sigma_{2m}},$$
where $S_{2m}$ denotes the permutation group on $\{ 1, \ldots, 2m\}$.
See also \cite{deift2}  for an elementary exposition on the use of Pfaffians in Random Matrix Theory.
According to \cite[(4.128),(4.135)]{deift2} the correlation functions can be expressed  via
			\begin{equation}\label{pfaffian}
			R_{N,k}^{(\beta)}(\lambda_1,\ldots, \lambda_k)
			=\text{Pf} ( K\, J) , \quad \text{with } K\coloneqq (K_{N,\beta}(\lambda_i,\lambda_j))_{i,j=1, \ldots, k}  
			\end{equation}
			and
			\begin{equation*} 
			J \coloneqq \text{diag}(\sigma, \ldots , \sigma) \in \mathbb{R}^{2N \times 2N}, \quad \sigma \coloneqq 
			\begin{pmatrix} 0 & 1 \\ -1 & 0 \end{pmatrix}.
			\end{equation*}
In (\ref{pfaffian}) the terms $K_{N,\beta}(x,y), \beta=1,4$ denote $2\times 2$ matrices with 
\begin{align*}
			K_{N,4}(x,y) \coloneqq 	\begin{pmatrix}
						S_{N,4}(x,y) &\frac{\partial}{\partial y} S_{N,4}(x,y) \\
						\\
						- \int_x^y S_{N,4}(t,y) \, dt & S_{N,4}(y,x)
					\end{pmatrix}
\end{align*}
and 
\begin{align*}
		K_{N,1}(x,y) \coloneqq  	\begin{pmatrix}
					S_{N,1}(x,y) & & \frac{\partial}{\partial y} S_{N,1}(x,y) \\ \\
					- \int_x^y S_{N,1}(t,y) \, dt  -\frac{1}{2}\sgn(x-y) &  & S_{N,1}(y,x)
				\end{pmatrix}.
\end{align*}
The convergence of the (rescaled) matrix kernels $K_{N,\beta}$ is e.g.~considered in \cite{deift2}, but as in the case $\beta=2$ the known results only apply to the convergence on compact sets and need to be refined to uniform convergence on $A_N$ (recall $|A_N| \to \infty$ as $N \to \infty$). 
Before we can state Theorem \ref{theo_Kn1_Kn4} we introduce some more notation (in analogy to (\ref{rescale_K2}) for $\beta=2$). 
For $\beta=1,4$ let $\widehat{K_{N,\beta}}(x,y) \in \mathbb R^{2 \times 2}$  
denote a rescaled version of $K_{N,\beta}(x,y)$
given by
	\begin{equation}\label{K_Hut}
		\widehat{K_{N,\beta}}(x,y)
		\coloneqq \frac{1}{N \psi(a)}	
			\begin{pmatrix} 
				\frac{1}{\sqrt{N \psi(a)}} 	& 0 \\
				& \sqrt{N \psi(a)} 
			\end{pmatrix}
		K_{N,\beta}(x,y) 
			\begin{pmatrix} 
				 \sqrt{N \psi(a)}  	& 0 \\
				0 &  \frac{1}{\sqrt{N \psi(a)}} 
			\end{pmatrix}.		
	\end{equation}
We denote the components of the rescaled matrices $\widehat{K_{N,\beta}}(x,y)$ by 
	\begin{equation*}
	   \begin{pmatrix}
	   \widehat{S_{N,\beta}}(x,y) & \widehat{D_{N,\beta}}(x,y) \\
	    \widehat{I_{N,\beta}}(x,y) & \widehat{S_{N,\beta}}(y,x)
	    \end{pmatrix}
	\coloneqq  \widehat{K_{N,\beta}}(x,y).
	\end{equation*}
%
%
%
\begin{theorem}[\cite{Scherbina}, \cite{deift2}, \cite{Diss}]\label{theo_Kn1_Kn4}
Let $V$ be a polynomial of even degree with positive leading coefficient and let $V$ be regular in the sense of \cite[(1.12),(1.13)]{DeiftKriecherbauer}.
Moreover, we assume $a \in \mathbb R$ with $\psi(a)>0$
($\psi$ is defined as in Theorem \ref{theo_Kn2}) and let $K_2$ be given in (\ref{conv_sine_kernel}).  
Let $(c_N)_{N \in \mathbb N}$ be a sequence satisfying $c_N \to \infty , \frac{c_N}{\sqrt{N}} \to 0$ as $N \to \infty$. 
Then we have 
\begin{enumerate}
\item[(i)] For $\beta=1$ and $N$ even 
				\begin{align*}
					&\sup_{x,y \in [-c_N,c_N]}
					\left| \widehat{S_{N,1}}\left(a+\frac{x}{N \psi(a)}, a+\frac{y}{N \psi(a)}\right)
					 - K_{2}(x,y)
				 \right| =\mathcal{O}\left(\frac{1}{\sqrt{N}}\right)	
					\\
					&\sup_{x,y \in [-c_N,c_N]}
					\left| \widehat{D_{N,1}}\left(a+\frac{x}{N \psi(a)}, a+\frac{y}{N \psi(a)}\right)
					- \frac{\partial }{\partial x}K_{2}(x,y)
					\right| =\mathcal{O}\left(\frac{1}{\sqrt{N}}\right)			 
					\\
					&\sup_{x,y \in [-c_N,c_N]}
					\left|
					\widehat{I_{N,1}}\left(a+\frac{x}{N \psi(a)}, a+\frac{y}{N \psi(a)}\right)
					- \int_0^{x-y} K_{2}(t,0)dt -\frac{1}{2}\sgn(x-y)
					\right| = \mathcal{O}\left(\frac{c_N}{\sqrt{N}}\right)
				\end{align*}
\item[(ii)] For $\beta=4$ and $N$ even
				\begin{align*}
				&\sup_{x,y \in [-c_N,c_N]}
					\left|
					\widehat{S_{N/2,4}}\left(a+\frac{x}{N \psi(a)}, a+\frac{y}{N \psi(a)}\right)
					-K_{2}(2(x-y))
					\right| = 
					\mathcal{O}\left(\frac{1}{\sqrt{N}}\right)			 
					\\
					&\sup_{x,y \in [-c_N,c_N]}
					\left|
					\widehat{D_{N/2,4}}\left(a+\frac{x}{N \psi(a)}, a+\frac{y}{N \psi(a)}\right)
					- \frac{\partial }{\partial x}K_{2}(2(x-y))
					\right| =
					\mathcal{O}\left(\frac{1}{\sqrt{N}}\right)		\\
					&\sup_{x,y \in [-c_N,c_N]}
					\left|
					\widehat{I_{N/2,4}}\left(a+\frac{x}{N \psi(a)}, a+\frac{y}{N \psi(a)}\right)
					-\int_0^{x-y} K_{2}(2t)dt  
					\right| = \mathcal{O}\left(\frac{c_N}{\sqrt{N}}\right)
				\end{align*}
				\end{enumerate}
\end{theorem}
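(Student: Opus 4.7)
The plan is to reduce both cases $\beta=1$ and $\beta=4$ to Theorem \ref{theo_Kn2} via the classical representation of the scalar kernels $S_{N,\beta}$ in terms of the Christoffel--Darboux kernel $K_{N,2}$ plus a low-rank correction built from a fixed number of the orthogonal polynomials $\varphi_j^{(N)}$. Such formulas, going back to Widom and systematised in \cite{deift2}, have the schematic form
$$ S_{N,\beta}(x,y) \;=\; K_{N,2}(x,y) \;+\; \Delta_{N,\beta}(x,y), $$
where $\Delta_{N,\beta}$ is a sum of products and indefinite integrals of the functions $\varphi_{N-1}^{(N)}$, $\varphi_N^{(N)}$ and closely related auxiliary quantities; the appropriate index shift $N \leftrightarrow N/2$ is understood in the $\beta=4$ case.

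The additional input required on top of Theorem \ref{theo_Kn2} is a uniform bulk asymptotic expansion of these extra building blocks. Using the Riemann--Hilbert analysis of \cite{DeiftKriecherbauer} together with the refinements carried out in \cite{Scherbina} and \cite{Diss}, one shows that $\varphi_{N-1}^{(N)}(a + \xi/(N\psi(a)))$ equals $(N\psi(a))^{-1/2}$ times a bounded oscillating function in $\xi$, with subleading correction of size $\mathcal{O}(N^{-3/2})$ uniformly for $\xi \in [-c_N,c_N]$, so long as $c_N/\sqrt{N}\to 0$. Inserting these asymptotics into the formulas for $\Delta_{N,\beta}$ and performing the rescaling (\ref{K_Hut}) shows that each correction contributes $\mathcal{O}(N^{-1/2})$ to the entries $\widehat{S_{N,\beta}}$ and $\widehat{D_{N,\beta}}$. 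Since this dominates the $\mathcal{O}(c_N/N)$ error inherited from Theorem \ref{theo_Kn2}, the claimed rate $\mathcal{O}(1/\sqrt{N})$ for the $S$- and $D$-entries follows.

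The $D$-entries are handled by the same scheme: estimate (\ref{theo_Kn2:eq2}) controls the sine-kernel derivative, while differentiation of $\Delta_{N,\beta}$ produces one additional factor of order $N\psi(a)$ which is precisely absorbed by the weighting $1/\sqrt{N\psi(a)}$ in the right-hand diagonal matrix of (\ref{K_Hut}). For the integral entries $\widehat{I_{N,\beta}}$ one integrates the pointwise $\mathcal{O}(1/\sqrt{N})$ estimate for $\widehat{S_{N,\beta}}$ over a rescaled interval of length at most $2 c_N$, producing the announced bound $\mathcal{O}(c_N/\sqrt{N})$. The extra constant $-\tfrac{1}{2}\sgn(x-y)$ appearing in the $\beta=1$ statement is part of the algebraic definition of $K_{N,1}$ rather than of $S_{N,1}$, and therefore already sits identically on both sides of the estimate.

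The main obstacle is precisely the extension of the bulk asymptotics of $\varphi_{N-1}^{(N)}$ and of the related Widom building blocks entering $\Delta_{N,\beta}$ from bounded sets, where they are classical, to the growing window $[-c_N,c_N]$. One must revisit the Deift--Kriecherbauer steepest-descent analysis and track how its small-norm error contributions depend on the rescaled argument $\xi$, proving that they remain of size $\mathcal{O}(N^{-1/2})$ uniformly for $|\xi|\le c_N$. The more restrictive hypothesis on $V$ compared to Theorem \ref{theo_Kn2} (polynomial of even degree, regular) is what makes the Riemann--Hilbert factorisation rigid enough for such uniform bounds to go through with current technology, while the condition $c_N/\sqrt{N}\to 0$ keeps the sub-leading oscillatory contributions in the steepest-descent analysis negligible and ensures that the integrated error for $\widehat{I_{N,\beta}}$ still tends to zero.
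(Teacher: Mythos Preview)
Your proposal is correct and follows essentially the same route as the paper's own sketch (the Remark immediately after Theorem \ref{theo_Kn1_Kn4}): write $S_{N,\beta}=K_{N,2}+\Delta_{N,\beta}$ via Widom's formalism, control $\Delta_{N,\beta}$ and its rescaled $y$-derivative by $\mathcal{O}(N^{-1/2})$ using the Shcherbina/Deift--Gioev estimates extended to the growing window $[-c_N,c_N]$, and then invoke Theorem \ref{theo_Kn2} for the Christoffel--Darboux part, noting that $c_N/N=\mathcal{O}(N^{-1/2})$ under the hypothesis $c_N/\sqrt{N}\to 0$. Your treatment of the $I$-entries by integrating the $S$-bound over an interval of length $\mathcal{O}(c_N)$, and your observation that the $\tfrac12\sgn$ term is purely algebraic, are also in line with how the paper packages the result.

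One small correction worth making: you attribute the polynomial hypothesis on $V$ to the Riemann--Hilbert analysis, but Theorem \ref{theo_Kn2} already covers real analytic $V$. The reason $V$ must be a polynomial here is that Widom's formula expresses $\Delta_{N,\beta}$ as a \emph{finite}-rank correction only when $V$ is polynomial (the commutator of $V'$ with the reproducing-kernel projection then has finite rank determined by $\deg V$). This is precisely what the paper's remark ``As $V$ is a polynomial, we can apply Widom's formalism'' is pointing to.
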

\begin{remark}
The proof of Theorem \ref{theo_Kn1_Kn4} can be derived from \cite{Scherbina}, \cite{deift2} and Theorem \ref{theo_Kn2} as follows (details will be given in a later publication):
We use the notation 
$\widehat{x}=a+\frac{x}{N \psi(a)}, \quad \widehat{y}=a+\frac{y}{N \psi(a)}$
and set  
\begin{equation*}
\Delta_{N,\beta} (\widehat{x},\widehat{y})    \coloneqq 
\frac{1}{N \psi(a)} \left( S_{N,\beta}(\widehat{x},\widehat{y})   - K_{N,2}(\widehat{x},\widehat{y}) \right)
=
 \widehat{S}_{N,\beta}(\widehat{x},\widehat{y})-\frac{1}{N \psi(a)}K_{N,2}(\widehat{x},\widehat{y}).
\end{equation*}
As $V$ is a polynomial, we can apply Widom's formalism \cite{WIDOM} 
to derive a representation of $\Delta_{N,\beta} $ in terms of orthogonal polynomials.
Together with the estimates contained in \cite{Scherbina} and \cite[section 6.3.1]{deift2} (generalised to the case of varying weights) we obtain
\begin{align*}
					\sup_{x,y \in [-c_N,c_N]}
					\left| \Delta_{N,\beta} (\hat{x},\hat{y})   \right|
					& = \mathcal{O} \left(   N^{-\frac{1}{2}}\right) \\\
					\sup_{x,y \in [-c_N,c_N]}
					\left|  \frac{1}{N \psi(a)} \frac{\partial}{\partial \hat{y} }\Delta_{N,\beta} (\hat{x},\hat{y})   					\right|
					&= \mathcal{O} \left(   N^{-\frac{1}{2}}\right).
\end{align*}
The claim of Theorem \ref{theo_Kn1_Kn4} then  follows from Theorem \ref{theo_Kn2}
 and from the assumption $\frac{c_N}{\sqrt{N}} \to 0$ for $N \to \infty$, which implies  $\frac{c_N}{N} = \mathcal{O}\left( \frac{1}{\sqrt{N}} \right)$.
\end{remark} 
Finally, we introduce some more notation (recall that $K_2$ was introduced in (\ref{conv_sine_kernel})): 
\begin{align*}
S_1(x,y)&\coloneqq K_2(x,y), \quad D_1(x,y) \coloneqq \frac{\partial }{\partial x}K_{2}(x,y), \\
\quad I_1(x,y) &\coloneqq \int_0^{x-y} K_{2}(t,0)dt -\frac{1}{2}\ \sgn(x-y) 
\end{align*}
\begin{equation*}
S_4(x,y) \coloneqq K_2(2x,2y), \quad D_4(x,y) \coloneqq \frac{\partial }{\partial x}K_{2}(2x,2y),\quad I_4(x,y) \coloneqq \int_0^{x-y} K_{2}(2t,0)dt 
\end{equation*}
and 
\begin{equation*}  
	K_{\beta}(x,y)\coloneqq 
	    \begin{pmatrix}
	       S_{\beta}(x,y) & D_{\beta}(x,y) \\
	       I_{\beta}(x,y) & S_{\beta}(y,x) 
	    \end{pmatrix}.
	\end{equation*}
\begin{remark}\label{remark_theo2}\text{}
\begin{enumerate}
\item[(i)]
With this notation the result of Theorem \ref{theo_Kn1_Kn4} reads:
There exists a sequence $\kappa_N$ such that 	$\kappa_N \to 0$ for $ N \to \infty$  
	and 
\begin{equation}\label{unif_Kn}
\widehat{K_{N,\beta}} \left(a+\frac{x}{N \psi(a)}, a +\frac{y}{N \psi(a)}  \right) = 	
	K_{\beta} (x,y) + \mathcal{O}(\kappa_N)
\end{equation}
uniformly for $x,y \in A_N$. 
\item[(ii)]
Theorems \ref{theo_Kn2} and \ref{theo_Kn1_Kn4} have been stated for invariant matrix ensembles satisfying  (\ref{T2.10}) and (\ref{T3.15}) and  do not cover all 10 symmetry classes (c.f.~section \ref{Tsec:2}). 
However, the statements of Theorem \ref{theo_Kn2} and Theorem \ref{theo_Kn1_Kn4} hold  mutatis mutandis for all invariant ensembles for which universality has been proved using a Riemann Hilbert analysis in the analytic setting (see e.g.~\cite{KuV1},  \cite{DGKV}, \cite{Van} for varying and non-varying Laguerre-type ensembles, \cite{KuV2} for Jacobi-type ensembles and \cite{DKMVZ1} for non-varying Hermite-type ensembles). In this way all symmetry classes are covered. The work of McLaughlin and Miller \cite{McL} shows that one can expect that some finite regularity assumption on $V$ combined e.g.~with the convexity of $V$ should also suffice.
\end{enumerate}
\end{remark}
From (\ref{unif_Kn}) one can deduce the analogue of Lemma \ref{lemma_B2} for $\beta=1,4$ using e.g.~the formulae in \cite{TracyWidom1}. In particular, one can derive the convergence of the rescaled correlation functions $B_{N,k}^{(\beta)}$.  
For $\beta=1,4$ we set (analogue to (\ref{Def_W2}) for $\beta=2$, see also (\ref{pfaffian}))
\begin{equation}\label{def_Wk}
W_k^{(\beta)}(t_1, \ldots, t_k)\coloneqq \text{Pf} (K\, J)  \quad \text{with } K\coloneqq (K_{\beta}(t_i,t_j))_{1\leq i,j\leq k},  \quad t_1,\ldots,t_k \in \mathbb R.
\end{equation}
%
\begin{lemma} \label{lemma_B1_B4}
Suppose  that the assumptions of Theorem \ref{theo_Kn1_Kn4} hold. Then the following holds for $\beta \in \{1,4\}$.
			\begin{enumerate}
				\item [(i)]There exists $C>0$  such that for all $1 \leq k\leq N, \,  t_1,\ldots,t_k \in A_N$ 
					we have
					\begin{equation}
					\label{Konv_B_k}					
					B_{N,k}^{(\beta)}(t_1,\ldots,t_k)=W_{k}^{(\beta)}(t_1,\ldots,t_k)+k!\cdot 
					C^k\,  \mathcal{O}(\kappa_N).
					\end{equation}
					The constant implicit in the $\mathcal{O}$-term is uniform in $k,N$ and in  $t_1, \ldots, t_k$ and $\kappa_N \to 0$ as $N \to \infty$ as in Remark \ref{remark_theo2} (i). 
				\item[(ii)]
				 	The function $W_k^{(\beta)}$ is a symmetric function on ${\mathbb R}^k$ for all $k \in \mathbb N$.
				\item[(iii)] For $k \in \mathbb N$, $t_1,\ldots,t_k$ and $ c \in  \mathbb R$: \quad 
				 	$
				 	W_k^{(\beta)}(t_1+c,\ldots,t_k+c)=W_k^{(\beta)}(t_1,\ldots,t_k).
				 	$
				 	\vspace{4pt}
				\item[iv)] 
				    There exists a constant $C>1$ such that  for all $1 \leq k \leq N$ we have 
				    \begin{align*}
				    \left| B_{N,k}^{(\beta)}(t_1, \ldots, t_k) \right|&\leq C^k \, \, k^{\frac{k}{2}} \quad \text{ for }t_1,
				    \ldots,t_k  \in A_N\\
				    \left| W_{k}^{(\beta)}(t_1, \ldots, t_k) \right|&\leq C^k \, \, k^{\frac{k}{2}}\quad \text{ for }t_1,\ldots,t_k  
				    \in \mathbb R.
				    \end{align*}
		\item[v)] For all $t \in \mathbb R$: \quad 
		$
		W_1^{(\beta)}(t)=1.
		$
			\end{enumerate}
	\end{lemma}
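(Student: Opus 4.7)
My overall strategy is to reduce all five claims to Pfaffian calculus, using the Pfaffian identity (\ref{pfaffian}) for $R_{N,k}^{(\beta)}$ and the uniform convergence $\widehat{K_{N,\beta}}(\hat x,\hat y) = K_\beta(x,y) + \mathcal{O}(\kappa_N)$ from Remark \ref{remark_theo2}(i). I would prove (i) first and then deduce (ii)--(v) with short auxiliary arguments.

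The heart of (i) is a Pfaffian rescaling. Set $D_1 \coloneqq \text{diag}(1/\sqrt{N\psi(a)},\sqrt{N\psi(a)})$ and $D_2 \coloneqq \text{diag}(\sqrt{N\psi(a)},1/\sqrt{N\psi(a)})$, so (\ref{K_Hut}) reads $K_{N,\beta}(x,y) = N\psi(a)\,D_2\,\widehat{K_{N,\beta}}(x,y)\,D_1$. A direct $2\times 2$ check gives $D_1\sigma = \sigma D_2$, hence $\mathcal{D}_1 J = J\mathcal{D}_2$ for the block-diagonal matrices $\mathcal{D}_j \coloneqq \text{diag}(D_j,\ldots,D_j)$. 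Writing $\widehat K$ for the $2k \times 2k$ block matrix with blocks $\widehat{K_{N,\beta}}(\hat t_i,\hat t_j)$, this gives $KJ = N\psi(a)\,\mathcal{D}_2\,(\widehat K J)\,\mathcal{D}_2$. Since $KJ$ is skew-symmetric and $\mathcal{D}_2$ is symmetric and invertible, $\widehat K J$ is also skew-symmetric; applying $\text{Pf}(B^T M B) = \det(B)\,\text{Pf}(M)$ with $B = \mathcal{D}_2$ (noting $\det D_2 = 1$, so $\det\mathcal{D}_2 = 1$) together with $\text{Pf}(\alpha M) = \alpha^k\,\text{Pf}(M)$ yields
\begin{equation*}
B_{N,k}^{(\beta)}(t_1,\ldots,t_k) = \text{Pf}(\widehat K J), \qquad W_k^{(\beta)}(t_1,\ldots,t_k) = \text{Pf}(K_\beta J).
\end{equation*}
The error bound (\ref{Konv_B_k}) then comes from expanding $\text{Pf}(\widehat K J) - \text{Pf}(K_\beta J)$ as a sum of $(2k-1)!!$ differences of products of $k$ entries, using the uniform bound on $\widehat{K_{N,\beta}} - K_\beta$ and the uniform boundedness of entries; the prefactor $k!\,C^k$ comes from $(2k-1)!! \le 2^k k!$ combined with the $k$ entry-replacements in each product and the elementary inequality $k\cdot 2^k \le 3^k$ (so one can take $C$ to be, say, three times a uniform entry bound).

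The remaining items are short. For (ii), the joint density $P_N^{(\beta)}$ is symmetric in $\lambda$, so $R_{N,k}^{(\beta)}$ and hence $B_{N,k}^{(\beta)}$ are symmetric; since $|A_N|\to\infty$, any fixed $(t_1,\ldots,t_k) \in \mathbb{R}^k$ lies in $A_N^k$ eventually, so $W_k^{(\beta)} = \lim_N B_{N,k}^{(\beta)}$ inherits the symmetry. For (iii), inspection of the definitions shows each entry of $K_\beta(x,y)$ depends only on $x-y$ (through $K_2(x,y) = \sin(\pi(x-y))/(\pi(x-y))$), so $K_\beta(t_i+c,t_j+c) = K_\beta(t_i,t_j)$. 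For (iv), $|\text{Pf}(M)|^2 = |\det M|$ combined with Hadamard's inequality on the $2k \times 2k$ matrix $\widehat K J$ (resp.\ $K_\beta J$), whose entries are bounded uniformly by some $C'$, yields $|\text{Pf}| \le (C'\sqrt{2k})^k = (C'\sqrt 2)^k k^{k/2}$. For (v), $S_\beta(t,t) = 1$ by continuity of $K_2$ at $0$, $D_\beta(t,t) = 0$ because $K_2(x,y)$ is even in $x-y$ (so $\partial_x K_2$ is odd and vanishes on the diagonal), and $I_\beta(t,t) = 0$ directly from the definition; hence $K_\beta(t,t)$ equals the $2\times 2$ identity, $K_\beta(t,t)\sigma = \sigma$, and $\text{Pf}(\sigma) = 1$. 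The main obstacle is the Pfaffian rescaling in (i): one must verify the commutation $D_1\sigma = \sigma D_2$, the skew-symmetry of $\widehat K J$, and the exact cancellation of the $(N\psi(a))^{\pm k}$ factors. Once this algebraic identity is in place, the remaining multilinear estimate and (ii)--(v) are routine.
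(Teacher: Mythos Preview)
Your argument is correct. The paper does not actually supply a proof of this lemma; it only remarks that the statement ``can [be] deduce[d] \ldots using e.g.\ the formulae in \cite{TracyWidom1}'' together with the uniform convergence (\ref{unif_Kn}), and then states the lemma. Your write-up is precisely a detailed execution of that program: you use the Pfaffian representation (\ref{pfaffian}) already recorded in the paper, verify the conjugation identity $KJ = N\psi(a)\,\mathcal{D}_2(\widehat K J)\mathcal{D}_2$ (the crucial algebraic step), and then feed in the uniform estimate of Remark~\ref{remark_theo2}(i) via a telescoping multilinear bound on the $(2k-1)!!$ Pfaffian monomials. The remaining parts (ii)--(v) are handled by the obvious arguments (symmetry inherited from $B_{N,k}^{(\beta)}$ in the limit, translation invariance of $K_\beta$, Hadamard's inequality, and direct evaluation of $K_\beta(t,t)$), all of which are sound.

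The only methodological difference worth noting is that the paper points to the Tracy--Widom recursion for the correlation functions, which would give (i) and (iv) through an inductive expansion of $R_{N,k}^{(\beta)}$ in terms of $R_{N,k-1}^{(\beta)}$ and the entries of $K_{N,\beta}$; you instead work directly with the explicit Pfaffian sum. Your route is more elementary and entirely self-contained given what the paper has already set up, whereas the Tracy--Widom formulae become more convenient later (as the paper notes in Section~\ref{sec:result}) when one needs finer cancellations for the variance estimates in the $\beta=1,4$ cases. For the present lemma either route works, and yours is arguably cleaner.
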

	\begin{remark}
	We note that for the results presented in section \ref{subsec:convergence} it is not necessary to keep track of  the $k$-dependence of the error in (\ref{Konv_B_k}). However, this estimate  is needed in the proof of Theorem \ref{main_theo}.  
	\end{remark}
%

\section{The expected empirical spacing distribution and gap probabilities}
\label{sec:PointwiseConvergence}

The basic result that we want to explain in this section is the convergence of the expected spacing distribution, i.e. 
\begin{equation} \label{sec5:eq1}
\lim_{N \to \infty} \mathbb E_{N,\beta} \left(\int_0^s d \sigma_N(H) \right)
=\int_0^s d\mu_{\beta}
\end{equation}
for some probability measures $\mu_{\beta}$. The limiting spacing distributions $\mu_{\beta}$ depend on $\beta$, but are universal otherwise (see Remark \ref{univ_mu_beta} at the end of this section). 
In our exposition we restrict ourselves to prove the convergence of $\mathbb E_{N,\beta} \left(\int_0^s d \sigma_N(H) \right)$ for $N \to \infty$. This is the content of section \ref{subsec:convergence}. It is not entirely obvious to show that the limit actually defines a probability measure. 
One way to prove this is to make a connection between $\mathbb E_{N,\beta} \left(\int_0^s d \sigma_N(H) \right)$ and the gap probabilities  and to use that the latter can be expressed in terms of Painlev\'{e} V transcendents. We will discuss this connection in section \ref{subsec:gap}.  
%
%
\subsection{Convergence of the expected empirical spacing distribution}\label{subsec:convergence}
In this section we will show the existence of
\begin{align*}
\lim_{N \to \infty} \mathbb E_{N,\beta} \left(  \int_0^s d\sigma_N(H)\right)
\end{align*}
and derive a representation for this limit.
As $ \int_0^s d\sigma_N(H)$ is a function of the ordered eigenvalues of $H$ the expectation is obtained by integration over the Weyl chamber with respect to $B_{N,N}^{(\beta)}(t)dt$ (see Definition \ref{defR}).

The first step in the proof is the introduction of   related counting measures $\gamma_N(k,H)$ for $k \geq 2$.
Recall that the eigenvalues of the random matrix $H$ are denoted by $\lambda_1 \leq  \ldots \leq \lambda_N$ and 
their rescaled versions by $\widetilde{\lambda}_1 \leq  \ldots \leq \widetilde{\lambda}_N$  (see (\ref{T3.20})). We define
\begin{equation} \label{def_gammaN}
	\gamma_N(k,H)\coloneqq  \frac{1}{|A_N|}
	\sum_{\substack{i_1 < \ldots < i_k,\\ \lambda_{i_1},\lambda_{i_k}\in 
	I_N}}\delta_{(\widetilde{\lambda}_{i_{k}}-
	\widetilde{\lambda}_{i_1})}, \quad k\geq 2.
\end{equation}
Observe that the normalizing factor $\frac{1}{|A_N|}$ corresponds to the fact that we expect $|A_N|=N \psi(a) |I_N|$ eigenvalues $\lambda_i\in I_N$ (see discussion below (\ref{def_sigma})).
The measures $\gamma_N(k,H)$ are related to $\sigma_N$ (see Lemma \ref{lemma_combinatoric} below) and the 
 main advantage of $\int_0^s d\gamma_N(k,H)$ over $\int_0^s d\sigma_N(H)$ is that  it is a symmetric function of the eigenvalues of $H$, if we replace $\widetilde{\lambda_{i_{k}}}-
	\widetilde{\lambda_{i_1}}$ in (\ref{def_gammaN}) by  $\max_{1 \leq j \leq k} \widetilde{\lambda_{i_j}} - \min_{1 \leq j \leq k} \widetilde{\lambda_{i_j}}$. 
This allows  us to calculate the expectation of $\int_0^s d\gamma_N(k,H)$ by integration over $\mathbb R^N$ (instead of $\mathcal{W}_N$) with respect to $\frac{1}{N!}B_{N,N}^{(\beta)}(t)dt$ (see (\ref{joint_density})) .
Thus we can exploit the invariance of the $k$-point correlation functions under permutations of the arguments together with their uniform convergence given in Lemma \ref{lemma_B2} resp.~in Lemma \ref{lemma_B1_B4}. 

By combinatorial arguments (see e.g.~Corollary 2.4.11, Lemma 2.4.9 and Lemma 2.4.12 in \cite{katzsarnak}) one can show the following connection between $\sigma_N(H)$ and $\gamma_N(k,H)$.
\begin{lemma}[c.f.~chap.~2 in \cite{katzsarnak} ]\label{lemma_combinatoric}
\begin{enumerate}
\item[(i)]For $N \in \mathbb N$ we have
\begin{equation}\label{combinatorics}
\int_{0}^s  \, d\sigma_N(H)= \sum_{k=2}^N (-1)^k \int_0^s
\, d \gamma_N(k,H).
\end{equation}
\item[(ii)] For $N \in \mathbb N$ and $m \leq N$ we have
\begin{align*}
\int_0^s d\sigma_N(H) &\geq \sum_{k=2}^m (-1)^k \int_0^s d\gamma_N(k,H) \quad \text{for }m \text{ odd} \\
\int_0^s d\sigma_N(H) &\leq \sum_{k=2}^m (-1)^k \int_0^s d\gamma_N(k,H) \quad \text{for }m \text{ even}.
\end{align*}
\end{enumerate}
\end{lemma}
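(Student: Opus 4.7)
The plan is a purely combinatorial argument that makes no use of the distribution of the eigenvalues. I would first rewrite both $\int_0^s d\sigma_N(H)$ and $\int_0^s d\gamma_N(k,H)$ as counts over pairs of indices, then collapse the alternating sum over $k$ using a standard Abel/Vandermonde-type identity, and finally read off (i) as an exact cancellation and (ii) as the nonnegativity of the truncated remainder. The only bookkeeping point is to keep track of which index tuples actually contribute; beyond that there is no analytic difficulty.

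\emph{Reduction to pair counts.} Since the eigenvalues are ordered and $I_N$ is an interval, the set $\{i:\lambda_i\in I_N\}$ is a (possibly empty) block of consecutive indices $\{p,\ldots,q\}$. Directly from (\ref{def_sigma}) and (\ref{def_gammaN}) one reads
\begin{align*}
|A_N|\int_0^s d\sigma_N(H) &= \#\bigl\{\,i : p\le i<q,\ \widetilde\lambda_{i+1}-\widetilde\lambda_i\le s\,\bigr\},\\
|A_N|\int_0^s d\gamma_N(k,H) &= \sum_{\substack{p\le i<j\le q\\ \widetilde\lambda_j-\widetilde\lambda_i\le s}}\binom{j-i-1}{k-2};
\end{align*}
for once the smallest and largest indices $i_1=i$ and $i_k=j$ of the $k$-tuple are fixed, the remaining $k-2$ intermediate indices form an arbitrary subset of the $j-i-1$ indices strictly between them, and all of them automatically lie in $\{p,\ldots,q\}$.

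\emph{Binomial identity.} The standard alternating partial sum $\sum_{\ell=0}^{L}(-1)^\ell\binom{M}{\ell}=(-1)^L\binom{M-1}{L}$, applied with $M=n-1$ and $L=m-2$, yields
$$\sum_{k=2}^{m}(-1)^k\binom{n-1}{k-2}=(-1)^m\binom{n-2}{m-2}\qquad(n\ge 2,\ m\ge 2),$$
while for $n=1$ only the $k=2$ term contributes and the sum equals $1$.

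\emph{Conclusion.} Multiplying the pair-count formula for $\gamma_N(k,H)$ by $(-1)^k$, summing over $2\le k\le m$ and interchanging the two sums gives
$$|A_N|\sum_{k=2}^{m}(-1)^k\int_0^s d\gamma_N(k,H)=|A_N|\int_0^s d\sigma_N(H)+(-1)^m R_m,$$
where
$$R_m:=\sum_{\substack{p\le i<j\le q,\ j-i\ge 2\\ \widetilde\lambda_j-\widetilde\lambda_i\le s}}\binom{j-i-2}{m-2}\ge 0.$$
Part (i) follows by taking $m=N$: since $j-i\le q-p\le N-1$ one has $j-i-2\le N-3<N-2=m-2$, so every binomial in $R_N$ vanishes. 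For (ii), the sign of the correction $(-1)^m R_m$ is $+$ for even $m$ and $-$ for odd $m$, yielding $\int_0^s d\sigma_N(H)\le\sum_{k=2}^{m}(-1)^k\int_0^s d\gamma_N(k,H)$ when $m$ is even and the reverse inequality when $m$ is odd.
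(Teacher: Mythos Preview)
Your argument is correct and is precisely the elementary inclusion--exclusion computation that the cited Katz--Sarnak lemmas encode: the paper does not give its own proof but defers to \cite{katzsarnak}, and what you have written is a clean, self-contained version of that combinatorial argument. The only cosmetic point is that the paper (see the proof of Theorem~\ref{pointwise}) interprets $\int_0^s$ via the indicator $\chi_{(0,s)}$, so for consistency one should replace your conditions $\widetilde\lambda_{i+1}-\widetilde\lambda_i\le s$ and $\widetilde\lambda_j-\widetilde\lambda_i\le s$ by membership in $(0,s)$; this changes nothing in the reasoning.
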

We can use Lemma \ref{lemma_combinatoric} to prove the following theorem, which states the point wise convergence of the empirical spacing distribution.
\begin{theorem}[c.f.~\cite{deift1} for $\beta=2$]\label{pointwise}
Suppose that the assumptions of Theorem \ref{theo_Kn2} ($\beta=2$) resp.~of Theorem \ref{theo_Kn1_Kn4} ($\beta=1,4$) are satisfied.  Then we have for $\beta=1,2,4$,  $s \in \mathbb R$ and $W_k^{(\beta)}$ as in (\ref{def_Wk}) ($\beta=1,4$) resp.~in  (\ref{Def_W2}) ($\beta=2$)
	\begin{equation} \label{Darst_mu_beta}
		\lim_{N \to \infty} \mathbb E_{N,\beta} \left(  \int_0^s d\sigma_N(H)\right) = \sum_{k\geq 			2} (-1)^k \int_{0 \leq z_2 \leq \ldots \leq z_k \leq s}  W_k^{(\beta)}(0, z_2,		
		\ldots,z_k) dz_2 \ldots dz_k.
	\end{equation}
	In particular, we claim that the series on the right hand side of the equation converges. 
\end{theorem}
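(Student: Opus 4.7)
\medskip
\noindent\textbf{Proof proposal.} The plan is to use Lemma \ref{lemma_combinatoric} as the backbone: for each fixed $k\geq 2$ compute the $N\to\infty$ limit of $\mathbb E_{N,\beta}(\int_0^s d\gamma_N(k,H))$ via the $k$-point correlation asymptotics of Lemmas \ref{lemma_B2}/\ref{lemma_B1_B4}; then use the Bonferroni-type sandwich of Lemma \ref{lemma_combinatoric}(ii) together with absolute summability of the limiting series to interchange $\lim_{N\to\infty}$ with the alternating sum over $k$.

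For the single-$k$ step I would work with the symmetrized form of $\gamma_N(k,H)$ (as indicated below (\ref{def_gammaN})), so that its expectation may be identified with an integral against $R_{N,k}^{(\beta)}$ via the standard correlation-function identity for symmetric test functions. Rescaling $t_j = a + \widetilde{t}_j/(N\psi(a))$ converts $R_{N,k}^{(\beta)}$ into $B_{N,k}^{(\beta)}$ and the constraint $t_j\in I_N$ into $\widetilde{t}_j\in A_N$; unfolding the symmetric sum into an ordered simplex yields
\begin{equation*}
\mathbb E_{N,\beta}\!\left(\int_0^s d\gamma_N(k,H)\right) = \frac{1}{|A_N|}\int_{\substack{\widetilde{t}_1 < \cdots < \widetilde{t}_k\in A_N \\ \widetilde{t}_k-\widetilde{t}_1 \leq s}} B_{N,k}^{(\beta)}(\widetilde{t}_1,\ldots,\widetilde{t}_k)\,d\widetilde{t}_1\cdots d\widetilde{t}_k.
\end{equation*}
Substituting $u=\widetilde{t}_1$, $z_j=\widetilde{t}_j-\widetilde{t}_1$ for $j\geq 2$ splits the domain into $\{0\leq z_2\leq\cdots\leq z_k\leq s\}$ together with $u$ ranging over an interval of length $|A_N|-z_k$. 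Replacing $B_{N,k}^{(\beta)}$ by $W_k^{(\beta)}$ via (\ref{Convercence_Bk}) resp.\ (\ref{Konv_B_k}) and using translation invariance $W_k^{(\beta)}(u,u+z_2,\ldots,u+z_k)=W_k^{(\beta)}(0,z_2,\ldots,z_k)$ from Lemma \ref{lemma_B1_B4}(iii), the $u$-integration contributes the factor $|A_N|-z_k$; division by $|A_N|$ and passage to the limit yield
\begin{equation*}
T_k \coloneqq \lim_{N\to\infty} \mathbb E_{N,\beta}\!\left(\int_0^s d\gamma_N(k,H)\right) = \int_{0\leq z_2 \leq \cdots \leq z_k \leq s} W_k^{(\beta)}(0,z_2,\ldots,z_k)\,dz_2\cdots dz_k.
\end{equation*}

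To assemble the alternating sum, Lemma \ref{lemma_combinatoric}(ii) gives for every odd $m < N$
\begin{equation*}
\sum_{k=2}^{m}(-1)^k\mathbb E_{N,\beta}\!\left(\int_0^s d\gamma_N(k,H)\right) \leq \mathbb E_{N,\beta}\!\left(\int_0^s d\sigma_N(H)\right) \leq \sum_{k=2}^{m+1}(-1)^k\mathbb E_{N,\beta}\!\left(\int_0^s d\gamma_N(k,H)\right).
\end{equation*}
Sending $N\to\infty$ at fixed $m$ sandwiches $\liminf$ and $\limsup$ of $\mathbb E_{N,\beta}(\int_0^s d\sigma_N(H))$ between $\sum_{k=2}^m(-1)^k T_k$ and $\sum_{k=2}^{m+1}(-1)^k T_k$; sending $m\to\infty$ then delivers the theorem, provided $\sum_{k\geq 2}(-1)^k T_k$ converges. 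Absolute convergence I would obtain from the uniform bound $|W_k^{(\beta)}|\leq C^k k^{k/2}$ of Lemma \ref{lemma_B1_B4}(iv) (resp.\ $|W_k^{(2)}|\leq 2^k$ from Lemma \ref{lemma_B2}(ii) passed to the limit) together with the volume $s^{k-1}/(k-1)!$ of the ordered simplex, giving $|T_k|\leq C^k k^{k/2}s^{k-1}/(k-1)!$, which by Stirling decays super-exponentially in $k$.

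The main obstacle is the single-$k$ limit: controlling simultaneously the $k!\,C^k\kappa_N$ error produced when replacing $B_{N,k}^{(\beta)}$ by $W_k^{(\beta)}$ and the $O(s/|A_N|)$ defect from truncation of the $u$-range near the endpoints of $A_N$, so that after division by $|A_N|$ only the leading translation-invariant contribution survives. At fixed $k$ the factor $k!$ is harmless and the growth $|A_N|\to\infty$ enforced by (\ref{T3.25}) is precisely what is needed; once this step is secured, the Bonferroni sandwich and the summability argument are essentially bookkeeping.
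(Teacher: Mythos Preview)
Your proposal is correct and follows essentially the same route as the paper's proof: derive the expectation of $\int_0^s d\gamma_N(k,H)$ as an integral of $B_{N,k}^{(\beta)}$ over the ordered simplex in $A_N^k$, replace $B_{N,k}^{(\beta)}$ by $W_k^{(\beta)}$ using Lemmas \ref{lemma_B2}/\ref{lemma_B1_B4}, exploit translation invariance after the change of variables $z_j=\widetilde t_j-\widetilde t_1$ to isolate the $|A_N|$-averaging and the $O(1/|A_N|)$ boundary defect, and then close via the Bonferroni sandwich of Lemma \ref{lemma_combinatoric}(ii) together with the simplex volume $s^{k-1}/(k-1)!$ and the bound $|W_k^{(\beta)}|\le C^k k^{k/2}$ for absolute summability. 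The paper organizes the error bookkeeping slightly differently (it records the two errors as $\mathcal O_{s,k}(\kappa_N)$ and $\mathcal O_{s,k}(1/|A_N|)$ in succession rather than naming them up front), but the ingredients and the logical structure are the same.
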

%
%
\begin{proof} 
The proof is in the spirit of \cite[chap.~5]{katzsarnak}. 
Taking expectations in (\ref{combinatorics}) leads to
	\begin{equation}  \label{combi_expect}
		\mathbb E_{N,\beta} \left( \int_0^s d\sigma_N(H)  \right)= \sum_{k= 2}^N (-1)^k  	
		\mathbb E_{N,\beta} \left(\int_0^s d\gamma_N(k,H) \right).
	\end{equation}
	We start with the calculation of the expectation on the right hand side of (\ref{combi_expect}).
Observe that we can rewrite
	\begin{equation*}
	\int_0^s d\gamma_N(k,H)= \frac{1}{|A_N|}\sum_{T \subset \{1,\ldots, N\}, |T|=k} \chi (\widetilde{\lambda}_T) 
	\end{equation*}
with
	\begin{equation*}
 	\widetilde{\lambda}_T \coloneqq (\widetilde{\lambda}_{i_1},  \ldots,  \widetilde{\lambda}_{i_k}) \quad \text{ for } T=\{i_1, \ldots, i_k\} \text{ with } 1 \leq i_1 < \ldots < i_k \leq N
	\end{equation*} 
and 
	\begin{equation*}
	\chi(t_t,\ldots,t_k)\coloneqq \chi_{(0,s)} \left(\max_{i=1,\ldots,k} t_i 	-\min_{i=1,\ldots,k} t_i \right) \prod_{i=1}^k \chi_{A_N} (t_i),
	\end{equation*}
where $\chi_{(0,s)}$ resp.~$\chi_{A_N}$ denote the characteristic functions on $(0,s)$ resp.~on $A_N$.

Using the symmetry of the joint density of the eigenvalues with respect to permutations of the variables (see (\ref{joint_density}) and Definition \ref{defR})  we conclude
	\begin{align}
		\mathbb E_{N,\beta} \left(\int_0^s d\gamma_N(k,H) \right)
		=& \frac{1}{N!} \int_{\mathbb R^N} \left( \frac{1}{|A_N|} \sum_{T 
		\subset 
		\{1,\ldots, N\}, |T|=k} \chi (t_T) \right)
		B_{N,N}^{(\beta)}(t) dt 
		\nonumber
		\\
		=& \frac{1}{|A_N|} \frac{1}{N!} \binom{N}{k} \int_{\mathbb R^N}    
		\chi (t_1,\ldots,t_k) 			 
		B_{N,N}^{(\beta)}(t) dt
		\nonumber
		\\
		=&  \frac{1}{|A_N|}\int_{\mathcal{W}_k \cap A_N^k}  \chi (t_1,
		\ldots,t_k)B_{N,k}^{(\beta)}(t_1,\ldots,t_k) dt_1 \ldots dt_k.
		\label{Darst_E_gamma}
	\end{align} 
It is straightforward to prove the following bound
	\begin{equation*} 
		\frac{1}{|A_N|}\int_{\mathcal{W}_k \cap A_N^k}  \chi (t_1,			\ldots,t_k) 
		dt_1 \ldots dt_k \leq \frac{s^{k-1}}{(k-1)!}
	\end{equation*}
which, together with the uniform convergence of  Lemma \ref{lemma_B2} (iii) and Lemma \ref{lemma_B1_B4} (i), leads to 
	\begin{align*}
		 &\frac{1}{|A_N|}\int_{\mathcal{W}_k \cap A_N^k}  \chi 				(t_1,
		 \ldots,t_k)B_{N,k}^{(\beta)}(t_1,\ldots,t_k) dt_1 \ldots 		dt_k 
		 \\
		 =&  \frac{1}{|A_N|}\int_{\mathcal{W}_k \cap A_N^k}  \chi 			(t_1,
		 \ldots,t_k)W_k^{(\beta)}(t_1,\ldots,t_k) dt_1 \ldots 			dt_k +  
		 \mathcal{O}_{s,k} (\kappa_N),
	\end{align*}
where the constant implicit in the $\mathcal{O}$-notation may depend on $s$ and $k$ as indicated by the subscripts.
Using the translation invariance of $W_k^{(\beta)}$ (see (\ref{Def_W2}) for $\beta=2$ and Lemma \ref{lemma_B1_B4} (iii) for $\beta=1,4$) together with the change of variables $z_1=t_1, z_i=t_i-t_1, i=2,\ldots, k$ and the definition of $\chi$ we have
	\begin{align*}
		&\frac{1}{|A_N|}\int_{\mathcal{W}_k \cap A_N^k}  \chi (t_1,
		\ldots,t_k)W_k^{(\beta)}(t_1,\ldots,t_k) dt_1 \ldots 			dt_k 
		\\
		=&  \int_{0 \leq z_2 \leq \ldots \leq z_k \leq s}  W_k^{(\beta)}(0, z_2,
		\ldots,z_k) dz_2 \ldots 			dz_k 
		\\
		-&\frac{1}{|A_N|}\int_{A_N} \left(
		\int_{0 \leq z_2 \leq \ldots \leq z_k \leq s}    W_k^{(\beta)}(0, z_2,\ldots,z_k) 
		\left( 1-\prod_{j=2}^k \chi_{A_N} (z_1+z_j)\right) dz_2 \ldots 			dz_k 
		\right)dz_1
		\\ 
		=& \int_{0 \leq z_2 \leq \ldots \leq z_k \leq s}  W_k^{(\beta)}(0, z_2,\ldots,z_k) 
		dz_2 \ldots dz_k + \mathcal{O}_{s,k} \left( \frac{1}{|A_N|}\right).
	\end{align*}
Hence we obtain  
\begin{equation}\label{Konv_gamma}
\lim_{N \to \infty}  \mathbb E_{N,\beta} \left(\int_0^s d\gamma_N(k,H) \right) = \int_{0 \leq z_2 \leq \ldots \leq z_k \leq s}  W_k^{(\beta)}(0, z_2,\ldots,z_k) 
		dz_2 \ldots dz_k.
\end{equation}
For later reference we observe that by 
the upper bounds on $W_k^{(\beta)}$ provided in Lemma \ref{lemma_B2} ($\beta=2$) and in Lemma \ref{lemma_B1_B4} ($\beta=1,4$) we have
  \begin{equation} \label{schranke_W}
 \lim_{N \to \infty}  \mathbb E_{N,\beta} \left(\int_0^s d\gamma_N(k,H) \right) \leq 
 C^k s^{k-1} \left( \frac{1}{\sqrt{k-1}} \right)^{k-1}.
  \end{equation} 
It remains to show that in (\ref{combi_expect}) the limit $N \to \infty$ may be interchanged with the infinite summation over $k$.
 Taking expectations in  Lemma \ref{lemma_combinatoric} (ii) together with the convergence in (\ref{Konv_gamma}) implies for $m$ odd
  \begin{align}
\sum_{k=2}^m (-1)^k  \lim_{N \to \infty} \mathbb E_{N,\beta} \left(\int_0^s d\gamma_N(k,H) \right) \leq \liminf_{N \to \infty} \,   \mathbb E_{N,\beta} \left(  \int_0^s d\sigma_N(H)\right) 
  \label{theorem3:eq1}
  \\
  \limsup_{N \to \infty} \,   \mathbb E_{N,\beta} \left(  \int_0^s d\sigma_N(H)\right)
  \leq \sum_{k=2}^{m+1} (-1)^k  \lim_{N \to \infty}  \mathbb E_{N,\beta} \left(\int_0^s d\gamma_N(k,H) \right). 
  \label{theorem3:eq2}
  \end{align}
  Inequality (\ref{schranke_W}) ensures the convergence of the series in (\ref{theorem3:eq1}) and (\ref{theorem3:eq2}) if we take $ m \to \infty$.
  Sending   $ m \to \infty$    in (\ref{theorem3:eq1}) and (\ref{theorem3:eq2}) implies that the limit $\displaystyle    \mathbb E_{N,\beta} \left(  \int_0^s d\sigma_N(H)\right)$ exists for $N \to \infty$.  We obtain 
  \begin{equation*}
    \lim_{N\to \infty} \mathbb E_{N,\beta} \left(  \int_0^s d\sigma_N(H)\right)
    = \sum_{k=2}^{\infty} (-1)^k  \lim_{N \to \infty}  \mathbb E_{N,\beta} \left(\int_0^s d\gamma_N(k,H) \right)
  \end{equation*}
  which, together with (\ref{Konv_gamma}), completes the proof.  
\end{proof}
\begin{remark}\label{univ_mu_beta}
In the above theorem we have obtained a representation for the limit of the expected spacing distribution in terms of $W_k^{(\beta)}$, which is hence universal in the sense that the limit does neither depend on $V$ nor on the details of the localisations, i.e.~on  the point $a$ or on  the interval $I_N$ as long as the assumptions of Theorem \ref{theo_Kn2} resp.~Theorem \ref{theo_Kn1_Kn4} are satisfied. 
\end{remark}

However, formula  (\ref{Darst_mu_beta}) is somewhat complicated. In the next section we show how it is related to the so-called gap probabilities that have an explicit representation in terms of particular Painlev\'{e} V functions.
 %
%
%
\subsection{Spacing distributions and gap probabilities}
\label{subsec:gap}
A gap probability is  the probability of having no eigenvalues in a given interval. 
Observe that for finite $N$ and $\beta=2$ we have (see e.g.~\cite[p.~108]{deift1})
	\begin{align*}
	&  \mathbb P_{N,2}(\{ \widetilde{\lambda}_1,\ldots, \widetilde{\lambda}_N\} \cap (0,s) = \emptyset) 
	= \frac{1}{N!}\int_{(\mathbb R \setminus (0,s))^N} B_{N,N}^{(2)} (t_1,\ldots,t_N) \, dt_1\ldots 
	dt_N 
	\\ 
	=& \sum_{k=0}^{N} \frac{(-1)^k}{k!} \int_0^s \ldots \int_0^s \det \left( K_{N,2} 	
	(t_i,t_j)\right)_{1\leq i,j \leq k} dt_1 \ldots  dt_k 
	\\
	= &\det (1- K_{N,2}|_{L^2(0,s)}). 
	\end{align*}
	Here $K_{N,2}|_{L^2(0,s)}$ denotes the integral operator  on $L^2(0,s)$  with kernel $K_{N,2}$ and the last equality is just a standard expansion for the corresponding Fredholm determinant. 
Recall that  $K_{N,2} \to K_2$ for $N \to \infty$ (Theorem \ref{theo_Kn2}).
Furthermore, one can also show that  the corresponding Fredholm determinants converge (see \cite{deift2} and also \cite{Widom_matrixkernel}). 
This motivates that the large $N$-limit
	\begin{equation*}
	G_2(s)\coloneqq \det (1- K_2|_{L^2(0,s)})
	\end{equation*}
 is called the gap probability (for $\beta=2$).
For $\beta=1$ and $4$ the gap probabilities $G_{\beta}$ are defined as square roots of determinants of operators on $L^2(0,s) \times L^2(0,s)$ (see e.g.~\cite[corollary 6.12]{deift2}).

By the standard expansion of the Fredholm determinant we have 
	\begin{equation}\label{gapprob1}
	G_2(s)=\sum_{k=0}^{\infty} \frac{(-1)^k}{k!} \int_0^s \ldots \int_0^s   W_k^{(2)}(t_1,	
	\ldots,t_k)  dt_1 \ldots  dt_k.  
	\end{equation}
Using more involved arguments 
the analogue of equation (\ref{gapprob1}) (with $W_k^{(2)}$ replaced by $W_k^{(\beta)}$ and $G_2$ replaced by $G_{\beta}$) can also  be shown for $\beta=1$
 and $\beta=4$ (see e.g.~\cite[sec.~7.1]{Diss}).
The following theorem  relates the derivatives of the gap probabilities to the limiting spacing distributions for all $\beta \in \{1,2,4 \}$  (see e.g.~\cite[p.~126]{deift1} for $\beta=2$).
%
%
\begin{lemma}\label{theo_gap}
For $\beta=1,2,4$ we have 
\begin{equation*}
-G_{\beta}'(s) =1- \lim_{N \to \infty} \mathbb E_{N,\beta} \left(  \int_0^s \,d\sigma_N (H) \right)
\end{equation*}
\end{lemma}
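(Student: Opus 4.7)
The plan is to differentiate the Fredholm-series representation of $G_\beta$ term by term and identify the result with the series for the limiting expected spacing distribution computed in Theorem \ref{pointwise}. The starting point is
\begin{equation*}
G_\beta(s) = \sum_{k=0}^\infty \frac{(-1)^k}{k!} \int_{[0,s]^k} W_k^{(\beta)}(t_1,\ldots,t_k)\, dt_1\cdots dt_k,
\end{equation*}
which is (\ref{gapprob1}) for $\beta=2$ and the analogous expansion mentioned just below it for $\beta=1,4$. Differentiating in $s$ and exploiting the symmetry of $W_k^{(\beta)}$ (Lemma \ref{lemma_B1_B4} (ii)) produces
\begin{equation*}
-G_\beta'(s) = \sum_{k=1}^\infty \frac{(-1)^{k+1}}{(k-1)!} \int_{[0,s]^{k-1}} W_k^{(\beta)}(s, t_2,\ldots,t_k)\, dt_2\cdots dt_k,
\end{equation*}
whose $k=1$ contribution is $W_1^{(\beta)}(s) = 1$ by Lemma \ref{lemma_B1_B4} (v).

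For each $k \geq 2$ the next step is to convert the summand into the shape appearing in Theorem \ref{pointwise}. First, I would use the symmetry of $W_k^{(\beta)}$ to restrict the cube $[0,s]^{k-1}$ to the simplex $\{0 < t_2 < \cdots < t_k < s\}$, with the combinatorial factor $(k-1)!$ cancelling against $1/(k-1)!$. Then I would apply the volume-preserving change of variables $w_j = t_{j+1} - t_2$ for $2 \leq j \leq k-1$ and $w_k = s - t_2$, which bijects this simplex onto $\{0 < w_2 < \cdots < w_k < s\}$. The full symmetry of $W_k^{(\beta)}$ moves the argument $s$ to the last slot, and the translation invariance of Lemma \ref{lemma_B1_B4} (iii) (shifting by $-t_2$) then turns the integrand into $W_k^{(\beta)}(0, w_2, \ldots, w_k)$. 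Summing over $k \geq 2$ and comparing with (\ref{Darst_mu_beta}) yields the claim.

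The main obstacle is the rigorous justification of the termwise differentiation of the Fredholm series in $s$. This is precisely what the Hadamard-type bound $|W_k^{(\beta)}| \leq C^k k^{k/2}$ from Lemma \ref{lemma_B1_B4} (iv) (together with its counterpart for $\beta=2$ available via Lemma \ref{lemma_B2} (ii) and Hadamard's inequality) is tailored for: it dominates $1/(k-1)!$ for large $k$ with geometric room to spare, so a routine dominated-convergence argument on any bounded $s$-interval permits the exchange of derivative and infinite sum. Everything else is bookkeeping with the symmetries of $W_k^{(\beta)}$ and one affine change of variables.
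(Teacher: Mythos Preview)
Your proof is correct and uses the same ingredients as the paper's proof (termwise differentiation of the Fredholm series, justified by the Hadamard-type bounds, together with the symmetry and translation invariance of $W_k^{(\beta)}$), but the paper organises them in a slightly slicker order that avoids your explicit change of variables. Rather than differentiating $G_\beta(s)$ directly at the upper limit, the paper introduces $\widetilde G_\beta(\varepsilon,s)$ with a variable lower limit $\varepsilon$, observes via translation invariance that $\widetilde G_\beta(\varepsilon,s)=G_\beta(s-\varepsilon)$, and then differentiates in $\varepsilon$ at $\varepsilon=0$. After passing to the ordered simplex with $t_1$ as the smallest variable, the fundamental theorem of calculus at the lower limit of the $t_1$-integral immediately sets $t_1=0$, so the integrand $W_k^{(\beta)}(0,t_2,\ldots,t_k)$ from Theorem~\ref{pointwise} appears without any further substitution. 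Your route---differentiate at $s$, then repair via the affine map $(t_2,\ldots,t_k)\mapsto(w_2,\ldots,w_k)$ with $w_j=t_{j+1}-t_2$ and $w_k=s-t_2$---is equally valid (and your Jacobian and image computations are correct), but it is a small detour that the paper's device sidesteps. Either way the analytic justification for exchanging derivative and sum is the same bound~(\ref{schranke_W}).
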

\begin{proof}
We introduce the function 
	\begin{align}
	\widetilde{G}_{\beta}(\varepsilon,s)  \coloneqq & \sum_{k=0}^{\infty} \frac{(-1)^k}{k!} 	
	\int_{\varepsilon}^s \ldots \int_{\varepsilon}^s   W_k^{(\beta)}(t_1,\ldots,t_k)  dt_1 
	\ldots  dt_k
	\nonumber
	 \\
	=& 1-s+ \varepsilon + \sum_{k = 2}^{\infty} (-1)^k \int_{\varepsilon}^s \left(     	
	\int_{t_1\leq t_2\leq \ldots \leq t_k \leq s}   W_k^{(\beta)}(t_1,\ldots,t_k)  dt_2 	
	\ldots  dt_k   \right) dt_1. \label{gapeq:1}
	\end{align}
	Here we have used $W_1^{(\beta)} (t) =1$ for all $t \in \mathbb R$ (c.f.~Lemma \ref{lemma_B2} and Lemma \ref{lemma_B1_B4}).
Then the translation invariance of $W_k^{(\beta)}$ implies $\widetilde{G}_{\beta}(\varepsilon,s)=\widetilde{G_{\beta}}(0,s-\varepsilon) =G_{\beta}(s-\varepsilon)$ and hence
$\frac{\partial}{\partial \varepsilon}\big|_{\varepsilon=0} \widetilde{G}_{\beta} (\varepsilon,s) =- G_{\beta}'(s)$. Differentiating each term of the series in (\ref{gapeq:1}) (which is absolutely convergent, see (\ref{schranke_W})) we obtain the desired result from Theorem \ref{pointwise}.
\end{proof}
\begin{remark}
As mentioned above there is a remarkable identity that allows to express the gap probabilities $G_{\beta}, \beta \in \{1,2,4\}$ in terms of  Painlev\'{e} V functions. More precisely, let 
 $\sigma$ be the solution  of 
	\begin{equation*}
	(s \sigma'')^2 + 4 (s \sigma'-\sigma) (s \sigma'-\sigma + (\sigma')^2)
	\end{equation*} 
with boundary condition 
	\begin{equation*}
	\sigma(s) \sim -\frac{s}{\pi} - \frac{s^2}{\pi^2}- \frac{s^3}{\pi^3} + \mathcal{O}(s^4) 	\quad \text{for }s \to 0.
	\end{equation*}
Then in the case $\beta=2$ we have (see \cite{jimbo})
	\begin{equation*}
	G_2(s) = \exp\left( \int_0^{\pi s} \frac{\sigma(t)}{t} \, dt\right).
	\end{equation*}
	For $\beta=1$ and $\beta=4$ see \cite{anderson} and  \cite{forresterwignertype} for analogue formulae.

\end{remark}
We recall that Lemma \ref{theo_gap} together with the  Painlev\'{e}  representations for $G_{\beta}$ are useful to verify that $\mu_{\beta}$ as defined through (\ref{sec5:eq1}) is indeed a probability measure (see \cite[chap.~6 and 7]{Diss}).
%
%
\section{Results}\label{sec:result}
In this section we state our new result (Theorem \ref{main_theo}) for the expected
empirical spacing distribution for invariant orthogonal and symplectic ensembles. 
We include a brief discussion of related results that can be found in the literature. 

Except for the point wise convergence for $\beta=2$ presented in Theorem  \ref{pointwise}  (c.f.~\cite{DeiftKriecherbauer}) the empirical spacing distribution has so far only been considered for circular ensembles. In the case $\beta=2 $ the circular unitary ensemble (CUE) is given by the unitary group $U(N)$ with the normalised translation invariant Haar measure. 
The joint distribution of the complex eigenvalues $e^{i \theta_1},\ldots,e^{i \theta_N}$ in the CUE and  in the related orthogonal and symplectic ensembles ($\beta=1,4$) is  given by
	\begin{align}\label{circular_measure}
	d\mathbb P_{N,\beta} (\theta) = \frac{1}{Z_{N,\beta}} \prod_{j<k} \left|  	
	e^{i\theta_k}-e^{i\theta_j}\right|^{\beta} \, d\theta_1 \ldots d\theta_N, \quad 	
	\beta=1,2,4. 
	\end{align}
As the expected spectral density is  constant for these ensembles, the eigenvalues can be normalised to have mean spacing one by the linear rescaling
	\begin{align*}
	\widetilde{\theta_i}\coloneqq \frac{N \theta_i}{2\pi}.
	\end{align*}
Observe that we do not need to localise the spectrum in these cases.

It is a result of  Katz and  Sarnak in \cite[chap.~1 and chap.~2]{katzsarnak} that for circular ensembles with $\beta=2$ 
the expected empirical spacing distribution converges to the same measure $\mu_2$ that we have defined in (\ref{sec5:eq1}). Moreover, they show a stronger version of convergence, i.e.~the vanishing of the expected Kolmogorov distance
	\begin{align}\label{Katz_Sarnak}
	\lim_{N\to \infty} \mathbb E_{N,\beta} \left( \sup_{s \in \mathbb R} \left| \int_0^s 	
	d\sigma_N(H) - \int_0^s d\mu_2  \right|\right) = 0.
	\end{align}
Here $\sigma_N$ is defined as the (normalised) counting measure of the nearest neighbour 
spacings between $\widetilde{\theta}_j\,$'s. The definition of $\sigma_N$ is similar to (\ref{def_sigma}) with the pre-factor altered to $1/(N-1)$ and without the restriction to $I_N$ in the sum.

In \cite{soshnikov}  the convergence in (\ref{Katz_Sarnak}) is sharpened, proving almost sure convergence, and generalised to COE ($\beta=1$), but not to $\beta=4$. Moreover, Soshnikov shows in \cite{soshnikov} for both CUE and COE a central limit theorem for spacings. For example, he proves that the appropriately    normalised  random variables 
	$$ \xi_N (s)= \frac{ \int_0^s d\sigma_N(H) - \mathbb E_{N,\beta}\left( \int_0^s d\sigma_N(H)  	\right)}{ \sqrt{N}}$$
converge  to a Gaussian process $\xi$ with $\mathbb E(\xi(s))=0$ and for which $\mathbb E(\xi(s) \xi(t))$ can be expressed in terms of the $k$-point correlations  of  (\ref{circular_measure}).

Another interesting result \cite[sec.~4.2]{anderson} concerns the  theory of determinantal  point processes. In \cite{anderson}  it is shown that for such point processes with constant intensities generated by a suitable class of kernels (including in particular the sine-kernel $K_2$)
the linear statistics of the empirical spacing distribution converge almost surely to the linear statistics of $\mu_2$ as the number of considered points tends to infinity. This result does not deal with the distribution of the eigenvalues of random matrices for finite $N$.
Nevertheless, it is conceivable that this result might be useful for proving the convergence of the empirical spacing distribution. 


We now turn to our recent results. 
We show in \cite{Diss} that the analogue version of  (\ref{Katz_Sarnak}) is valid for orthogonal and symplectic invariant ensembles  satisfying (\ref{T2.10}) and (\ref{T3.15}). In fact, we reduce the question of convergence of the expected Kolmogorov distance of the empirical spacing distribution  to the convergence of the corresponding kernel functions.
All information that is needed on the convergence
 $K_{N,\beta} \to K_{\beta}$
is summarised in the following  
\begin{assumption}\label{main_assumption}
We consider invariant ensembles with joint distribution of the eigenvalues given by  (\ref{T2.10}) and (\ref{T3.15})  for $\beta \in \{1,2,4\}$. We assume that the limiting spectral density exists and we choose  $a$, $I_N$ and the rescaling of the eigenvalues as in section \ref{sec:3_2}. 
We assume that there exists  a sequence $\kappa_N$ such that $\kappa_N \to 0$ for $ N \to \infty,$ such that for the rescaled (matrix) kernels $\widehat{K}_{N,\beta}$ (see (\ref{rescale_K2}) and (\ref{K_Hut})) 
we have 
\begin{align}\label{assumption}
	\widehat{K_{N,\beta}} \left(a+\frac{x}{N \psi(a)}, a +\frac{y}{N \psi(a)}  \right) = 	
	K_{\beta} (x,y) + \mathcal{O}(\kappa_N)
	\end{align} 
	uniformly for $x,y \in A_N$.
\end{assumption}
%
%
%
%
Our main theorem then reads 
	\begin{theorem}[\cite{Diss}]\label{main_theo}
	Under Assumption \ref{main_assumption} we have 
	\begin{align}\label{main_theo:eq}
	\lim_{N\to \infty} \mathbb E_{N,\beta} \left( \sup_{s \in \mathbb R} \left| \int_0^s 	
	d\sigma_N(H) - \int_0^s d\mu_{\beta}  \right|\right) = 0,
	\end{align}
	where $\mu_{\beta}$ is defined through (\ref{sec5:eq1}).
  	\end{theorem}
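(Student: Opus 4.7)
The plan is to lift the pointwise-in-$s$ convergence of Theorem \ref{pointwise} to uniform-in-$s$ convergence in expectation. I would split
\begin{align*}
\sup_s \Bigl|\int_0^s d\sigma_N(H) - \int_0^s d\mu_\beta\Bigr|
&\le \sup_s \Bigl|\int_0^s d\sigma_N(H) - \mathbb E_{N,\beta}\int_0^s d\sigma_N(H)\Bigr| \\
&\quad + \sup_s \Bigl|\mathbb E_{N,\beta}\int_0^s d\sigma_N(H) - \int_0^s d\mu_\beta\Bigr|
\end{align*}
and treat the deterministic and fluctuation parts separately. For the deterministic part, both cumulative functions are non-decreasing, the limit $s\mapsto\int_0^s d\mu_\beta$ is continuous (from its Painlev\'e~V description recalled in section \ref{subsec:gap}), and pointwise convergence is Theorem \ref{pointwise}; the classical P\'olya / Glivenko--Cantelli argument then promotes this to uniform convergence on $\mathbb R$.

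For the fluctuation part, the strategy is to first establish pointwise-in-$s$ vanishing of $\operatorname{Var}_{N,\beta}(\int_0^s d\sigma_N(H))$ and then apply the same P\'olya-type upgrade pathwise. Using the combinatorial expansion of Lemma \ref{lemma_combinatoric}(i),
\begin{equation*}
\int_0^s d\sigma_N(H) = \sum_{k=2}^N (-1)^k \int_0^s d\gamma_N(k,H),
\end{equation*}
the variance decomposes into a double sum of covariances $\operatorname{Cov}(\int_0^s d\gamma_N(k,H),\int_0^s d\gamma_N(\ell,H))$. Each covariance can be written, in analogy with the manipulation leading to (\ref{Darst_E_gamma}), as an integral over paired $k$- and $\ell$-tuples involving the $(k+\ell)$-point correlation function (with overlap strata contributing lower-order correlations). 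The prefactor $|A_N|^{-2}$ combined with a simplex volume of order $|A_N|$ yields a bound of order $\mathcal O_k(1/|A_N|)$. Truncating the alternating series at some $k=k(N)\to\infty$ slowly enough, and balancing the $C^k k^{k/2}$ bound of Lemma \ref{lemma_B1_B4}(iv) against the $s^{k-1}/(k-1)!$ simplex factor, makes the tail negligible and leaves a kept portion still of order $\mathcal O(1/|A_N|)$. Chebyshev's inequality then gives pointwise convergence in probability of $F_N(s):=\int_0^s d\sigma_N(H)$ to $F(s):=\int_0^s d\mu_\beta$.

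Combining the two ingredients, uniform convergence in probability of $F_N$ to the continuous limit $F$ follows from P\'olya's theorem applied to the pathwise non-decreasing family $\{F_N\}$, which is uniformly bounded by $(|A_N|-1)/|A_N|\le 1$. Dominated convergence then converts uniform convergence in probability into convergence of $\mathbb E\sup_s|F_N(s)-F(s)|$ to zero, giving (\ref{main_theo:eq}). The main obstacle I expect is the uniform-in-$k$ control of the covariance series: the $k$-dependence of the correlation-function error, kept explicit in Lemma \ref{lemma_B1_B4}(i) precisely for this application (as emphasized in the remark following the lemma), must be sharp enough that the double sum converges after truncation, and the covariance analogue of the pointwise computation (\ref{Darst_E_gamma}) has to be carried out uniformly in $s$ without sacrificing the $1/|A_N|$ gain that makes the whole argument close.
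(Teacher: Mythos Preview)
Your overall strategy matches the paper's: both reduce the problem to variance bounds on $\int_0^s d\gamma_N(k,H)$ (the paper names this explicitly as its second step, using the representation \cite{TracyWidom1} for $\beta=1,4$ just as you suggest), and both then upgrade from pointwise-in-$s$ to uniform-in-$s$. Where you diverge is in that final upgrade. The paper controls the $s$-dependence of the bound on $\mathbb{E}_{N,\beta}\bigl|\int_0^s d\sigma_N - \int_0^s d\mu_\beta\bigr|$ explicitly and combines this with tail estimates on $\mu_\beta$; you instead invoke a P\'olya-type monotonicity argument followed by dominated convergence. Your route is arguably cleaner when it works, since it avoids tracking $s$-dependent constants through the covariance computation, while the paper's route yields more quantitative information.

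There is, however, a genuine gap in your version. You assert that $F_N(s)=\int_0^s d\sigma_N(H)$ is uniformly bounded by $(|A_N|-1)/|A_N|\le 1$, but this is false: $\sigma_N(\mathbb R)$ equals $(M_N-1)_+/|A_N|$ where $M_N$ is the \emph{random} number of eigenvalues in $I_N$, and $M_N$ is not deterministically $\le |A_N|$. This breaks two things at once. First, the P\'olya argument needs $F_N(\infty)\to 1$ in probability to control the region $s>s_m$; this is not automatic from pointwise convergence at finitely many $s_j$. Second, without a deterministic bound, dominated convergence does not apply to $\sup_s|F_N-F|$. Both issues are repairable with the correlation-function estimates already in hand: the one- and two-point functions give $\mathbb{E}_{N,\beta}[M_N/|A_N|]\to 1$ and $\sup_N \mathbb{E}_{N,\beta}\bigl[(M_N/|A_N|)^2\bigr]<\infty$, so that $F_N(\infty)\to 1$ in probability and the family $\{\sup_s|F_N-F|\}_N$ is uniformly integrable, which substitutes for domination. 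But as written the argument does not close, and this is precisely the place where the paper's explicit tail control on $\mu_\beta$ does work that your monotonicity shortcut tries to bypass.
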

In particular, our theorem covers all invariant ensembles for which the convergence of $K_{N,\beta}$ to $K_{\beta}$ 
has been proved using a Riemann-Hilbert approach (see e.g.~Theorem \ref{theo_Kn2} and \ref{theo_Kn1_Kn4}). 
Observe that our formulation of Theorem \ref{main_theo} also includes all ensembles for which (\ref{assumption}) will be established in the future. 

The proof  follows the path devised by Katz and Sarnak in \cite{katzsarnak} for $\beta=2$ and extends their methods in two ways. On the one hand, we have to consider the additional localisation that is needed in our setting.  We can use the same methods as in \cite{katzsarnak} to express the expected  empirical distribution of the spacings in terms of the rescaled $k$-point correlation functions $B_{N,k}^{(\beta)}$ (see (\ref{combi_expect}) and  (\ref{Darst_E_gamma})). On the other hand, we generalise their methods to $\beta=1$ and $\beta=4$. Here  the relation between the matrix-kernel functions $K_{N,\beta}$ and the expected empirical  spacing distribution is   more involved. Moreover, for $\beta=4$ subtle cancellations have to be used to establish convergence.

The proof of the main  theorem comes in three steps: 
The first step is the point wise convergence as shown in Theorem  \ref{pointwise}.   This convergence is well known although it seems that the details have so far only been worked out in the case $\beta=2$ (see e.g.\ \cite{deift2}, \cite{anderson}). In order to obtain the convergence of 
	\begin{align} \label{exp_Kolmog}
	\mathbb E_{N,\beta} \left(  \left| \int_0^s \, d\sigma_N(H) - \int_0^s 	
	\,d\mu_{\beta} \right|\right)
	\end{align}
for any given $s \in \mathbb R$, we bound the variance of $\int_0^s \,d\gamma_N(k,H)$ in the second step. As stated above, this is the most challenging part in  generalising the method of Katz and Sarnak to $\beta=1,4$. Here we found the representation of the $k$-point correlation functions in terms of $K_{N,\beta}$ as provided in \cite{TracyWidom1} useful.  Finally, the desired result is obtained by controlling the $s$-dependence of the bound on (\ref{exp_Kolmog}) together with tail estimates on $\mu_{\beta}$.  
Details of the proof can be found in \cite{Diss}.
%
%
\section{Numerical results}
\label{sec:numerik}
%
In addition to the analytical considerations that led to Theorem \ref{main_theo}, the work \cite{Diss} also contains numerical experiments in MATLAB in order to determine the rate of convergence in  (\ref{main_theo:eq}). 
We summarise some of the findings of \cite{Diss} in the present section.

We conduct our experiments for the three classical Gaussian ensembles GOE, GUE, GSE and for general $\beta$-ensembles with $\beta \in \{7,15.5,20\}$. We also include 
 real, complex and quaternionic Wigner matrices with i.i.d.~entries that are drawn e.g.~from  beta, poisson, exponential, uniform or chi-squared distributions. 
Observe (see section \ref{sec:3_1}) that  in all these cases the limiting spectral density $\psi$ is given by the Wigner semi-circle law. We may adapt the parameters such that the support of $\psi$ is the interval $[-1,1]$. 

A little thought shows that the localisation and rescaling procedure to define $d\sigma_N$ (see (\ref{def_sigma})) will not lead to an optimal and natural rate of convergence. 
Firstly, the rate will depend on the number of eigenvalues, i.e.~on the length of $I_N$. Secondly, the linear rescaling (\ref{T3.20}) is not optimal since the density $\psi$ is approximated on all of $I_N$ by the constant $\psi(a)$. A far better rescaling in this respect (but less suitable for analytical considerations) is the so-called unfolding, that we explain now.  
Let $I \subset [-1,1] = \text{supp} (\psi)$ be an interval. Denote by 
\begin{equation*}
F(t) \coloneqq \frac{2}{\pi} \int_0^t \sqrt{1-s^2} \chi_{[-1,1]} ds
\end{equation*}
the distribution function of the semi-circle law. The rescaling is then given by 
\begin{equation*}
\widetilde{\lambda}_i \coloneqq N F(\lambda_i).
\end{equation*}
Observe that in an average sense
\begin{equation*}
\widetilde{\lambda}_{i+1} -\widetilde{\lambda}_{i}  \approx N F'(\lambda_i) (\lambda_{i+1}- \lambda_i) \approx 
N \psi(\lambda_i) \frac{1}{N\psi(\lambda_i)}=1.
\end{equation*}
The spacing distribution corresponding to the unfolded statistics is given by 
\begin{equation*}
\sigma_N^{(\text{unf})}(H) \coloneqq \frac{1}{|\{i: \lambda_i \in I \} |-1}
\sum_{\lambda_i, \lambda_{i+1}\in I} 
\delta_{\widetilde{\lambda}_{i+1}-\widetilde{\lambda}_i}. 
\end{equation*}
We restrict our attention to intervals $I=[-0.1;0.1]$, $I=[-0.5;0.5]$,   $ I=[-0.75;0.75]$ and  some non-centred intervals such as $I=[0.4;0.6]$

We provide numerical evidence for the claim that   the leading asymptotic of the considered expected Kolmogorov distance is $CN^{-1/2}$, i.e.\
\begin{equation}
\label{report}
E_N \coloneqq \mathbb{E}_{N,\beta} \left(\sup_{s \in \mathbb{R}} \left|\int_{-\infty}^s \, 
  d\sigma_N^{\text{(unf)}}(H) - \int_{-\infty}^s \, d\mu_{\beta}\right|\right)
   \sim C N^{-1/2}
\end{equation}  
for some constant $C$ that depends mildly on the chosen ensemble and on the choice of $I$. 
In Figures \ref{fig:1} to \ref{fig:3}  we have plotted $y\coloneqq -\log E_N$ against $x \coloneqq \log N$. 
We see in all three cases that our numerical approximations to $y$, obtained  by Monte Carlo simulations, cluster impressively close to a straight line
\begin{equation*}
y(x) \sim ax +b, \quad \text{i.e.~} \quad E_N \sim e^{-b} N^{-a}.
\end{equation*}
In all our experiments \cite{Diss} we found $a \in [0.48; 0.53]$ and $b \in [-1,0.5]$, see also Table \ref{tab:1}.
 \begin{figure}
\begin{overpic}[clip=true, trim=0.5cm 0.4cm 0.5cm 0.6cm, width=7cm,]{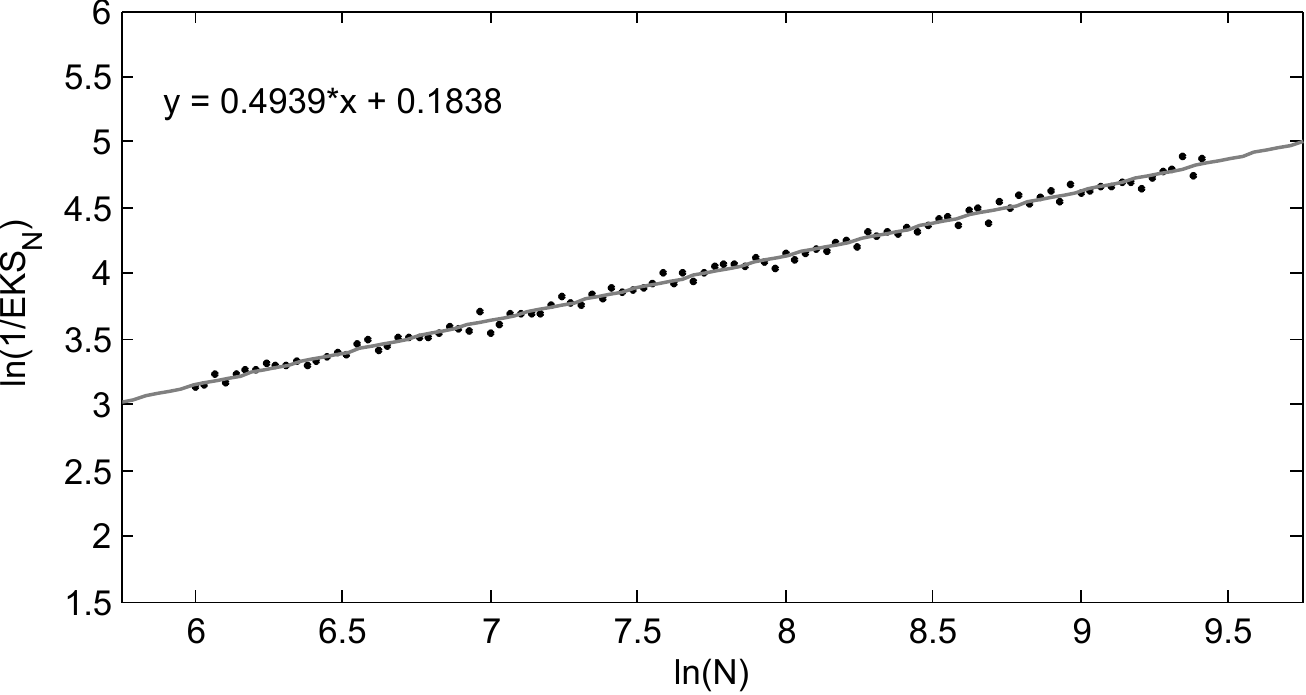}
\put(45,-4){$\log(N)$}
\put(-5,15){\rotatebox{90}{$-\log(E_N)$}}
\end{overpic}
\caption{Data set and best linear fit for $\beta$-ensemble 
with $\beta=4$ and $I=[-0.5,0.5]$}
\label{fig:1}       
\end{figure}

\begin{figure}
\begin{overpic}[clip=true, trim=0.5cm 0.4cm 0.5cm 0.6cm, width=7cm]{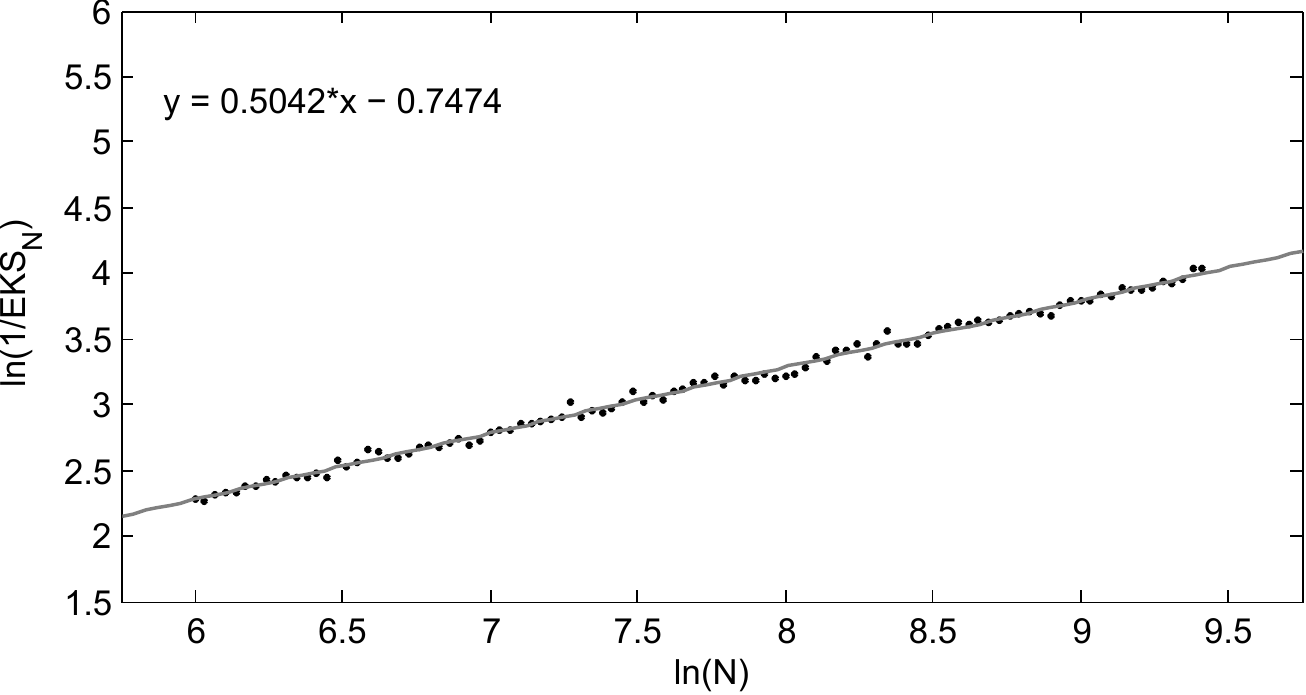}
\put(45,-4){$\log(N)$}
\put(-5,15){\rotatebox{90}{$-\log(E_N)$}}
\end{overpic}
\caption{Data set and best linear fit for $\beta$-ensemble 
with
$\beta=20$ and $I=[0.4,0.6]$.}
\label{fig:2}       
\end{figure}
%

\begin{figure}
\begin{overpic}[clip=true, trim=0.5cm 0.4cm 0.5cm 0.6cm, width=7cm]{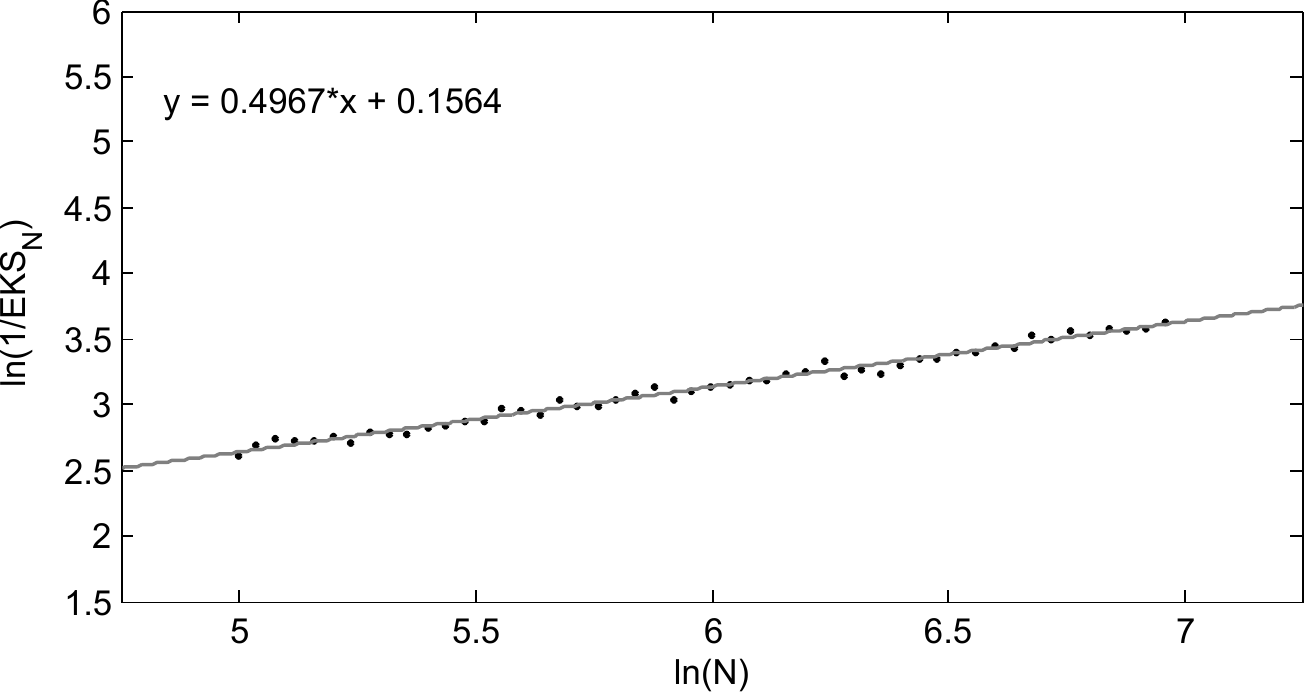}
\put(45,-4){$\log(N)$}
\put(-5,15){\rotatebox{90}{$-\log(E_N)$}}
\end{overpic}
\caption{Data set and best linear fit for real Wigner matrices with unfolded statistics  and beta (2,5)-distributed entries    and $I=[-0.5,0.5]$.}
\label{fig:3}    
\end{figure}

\begin{table}[h]
\centering
\caption{Best fit straight lines for $\beta$-ensembles with unfolded statistics}
\label{tab:1}       
\begin{tabular}{p{1.5cm}p{2.8cm}p{3.6cm}}
\hline\noalign{\smallskip}
ensemble & interval $I$ & best linear fit   \\
\noalign{\smallskip}\hline\noalign{\smallskip}
$\beta=1$ & $I=[ 0.7; 0.9]$  & $y=0.5077x-0.9758$  \\
$\beta=2$ &$I=[-0.1 ;0.1 ]$ & $y=0.4958x-0.6325$\\
$\beta=4$ & $I=[-0.75 ;0.75 ]$ & $y=0.4965x+0.3356$\\
$\beta=7$ & $I=[ -0.1; 0.1]$  & $y=0.4991x-0.6357$\\
$\beta=15.5$ &$I=[-0.5 ; 0.5]$   & $y=0.4945x+0.1765$\\
$\beta=20$ & $I=[ 0.4;0.6 ]$  & $y=0.5042x-0.7474$\\
\noalign{\smallskip}\hline\noalign{\smallskip}
\end{tabular}
\end{table}
One important issue in the numerical experiments is the approximation of the liming measures $\mu_{\beta}$ resp.\ their densities $p_{\beta}$.
For $\beta=1,2,4$ we use the MATLAB toolbox by Bornemann (c.f.~\cite{Bornemann2}) for a fast and precise evaluation of the related gap probabilities. Then we obtain the limiting densities $p_{\beta}$ by numerical differentiation.
For $\beta \in \mathbb R_+ \setminus \{1,2,4\}$ no such precise numerical schemes for the evaluation of $p_{\beta}$ are available. 
Instead, we use the generalised Wigner surmise (see \cite{LeClaer}), which is only an approximation to the limiting distribution. One may wonder how one can test numerically a limiting law without  knowing its precise form.  Looking at (\ref{report}) one notes that the numerics will not detect the replacement of $\int_{-\infty}^s d\mu_{\beta}$ by an approximation $\int_{-\infty}^s d\hat{\mu}_{\beta}$ as long as their deviation is small compared to $E_N$.
As it turns out, the Wigner surmise approximates the true limiting law well enough to confirm (\ref{report}) for the range of $N$ and  $\beta$ that we have tested. Moreover,  since  in all our experiments $E_N$ took values below $0.02$ we may safely infer that the difference between the distribution functions of the Wigner surmise and the true distribution is less than $0.01$ for all values of $\beta$ that we have investigated, i.e.~$\beta \in\{ 7, 15.5, 20\}$.

\section*{acknowledgement}
Both authors acknowledge support from the Deutsche Forschungsgemeinschaft in the framework of the SFB/TR 12 ``Symmetries and Universality in Mesoscopic Systems''. We are grateful to Peter Forrester for useful remarks.

\bibliographystyle{plain}
\bibliography{bibliography}

\end{document}